\DeclareMathOperator{\mat}{Mat}
\DeclareMathOperator{\Id}{Id}
\definecolor{bloesh}{RGB}{160, 194, 185}
\definecolor{roo}{RGB}{241,23,70}
\newfont{\gothic}{eufm10}
\theoremstyle{plain}
\newtheorem{thr}{Theorem}[section]
\newtheorem{lem}[thr]{Lemma}
\newtheorem{prop}[thr]{Proposition}
\theoremstyle{definition}
\newtheorem{defi}[thr]{Definition}
\theoremstyle{remark}
\newtheorem{remk}[thr]{Remark}
\theoremstyle{remark}
\newcommand{\field}[1]{\mathbb{#1}}
\newcommand{\R}{\field{R}}
\newcommand{\N}{\field{N}}
\newcommand{\C}{\field{C}}
\newcommand{\Z}{\field{Z}}
\definecolor{bloesh}{RGB}{160, 194, 185}
\definecolor{roo}{RGB}{241,23,70}
\title{A new algorithm for computing idempotents of $\mathcal{R}$-trivial monoids}
\author{Eddie Nijholt\footnote{\mbox{Department of Mathematics, University of Illinois at Urbana-Champaign, USA, \href{mailto:eddie.nijholt@gmail.com}{eddie.nijholt@gmail.com} }  }, Bob Rink \footnote{\mbox{Department of Mathematics, Vrije Universiteit Amsterdam, the Netherlands, \href{mailto:b.w.rink@vu.nl}{b.w.rink@vu.nl} }}, S\"oren Schwenker  \footnote{\mbox{Department of Mathematics, Universit\"at Hamburg, Germany, \href{mailto:soeren.schwenker@uni-hamburg.de}{soeren.schwenker@uni-hamburg.de} }}}
\date{\today}
\begin{document}

\maketitle

\begin{abstract} \label{abstract}
The authors of \cite{Rtriv} provide an algorithm for finding a complete system of primitive orthogonal idempotents for $\C\mathcal{M}$, where $\mathcal{M}$ is any finite $\mathcal{R}$-trivial monoid. Their method relies on a technical result stating that  $\mathcal{R}$-trivial monoids are equivalent to so-called weakly ordered monoids. We provide an alternative algorithm, based only on the simple observation that an $\mathcal{R}$-trivial monoid may be realised by upper triangular matrices.  This approach is inspired by results in the field of coupled cell network dynamical systems, where $\mathcal{L}$-trivial monoids (the opposite notion) correspond to so-called feed-forward networks. We also show that our algorithm only has to be performed for $\Z\mathcal{M}$, after which a complete system of primitive orthogonal idempotents may be obtained for $R\mathcal{M}$, where $R$ is any ring with a known complete system of primitive orthogonal idempotents. In particular, the algorithm works if $R$ is any field.
\end{abstract}
\section{Introduction}
A monoid $\mathcal{M}$ may be seen as a generalisation of a group, in the sense that $\mathcal{M}$ is a set with an associative multiplication and a unit $e$, but where the elements do not necessarily have inverses. If an element $\sigma \in \mathcal{M}$ does have an inverse (that is, an element $\tau \in \mathcal{M}$ such that $\sigma\tau = \tau\sigma = e$), then the set 
\begin{equation}
\sigma\mathcal{M}:= \{ \sigma \kappa \, \mid \, \kappa \in \mathcal{M}\}
\end{equation}
necessarily equals $\mathcal{M}$. It follows that when $\mathcal{M}$ is a group, the sets $\sigma\mathcal{M}$ are all the same. Conversely, if the sets $\sigma\mathcal{M}$ are the same for all $\sigma$ in a monoid $\mathcal{M}$, then they are necessarily all equal to $e\mathcal{M} = \mathcal{M}$. It can then easily be shown that $\mathcal{M}$ is a group, see for example \cite{leftinv}. Hence, groups can be uniquely characterised as monoids for which the sets $\sigma\mathcal{M}$ are the same for all $\sigma \in \mathcal{M}$. Using this characterisation, an $\mathcal{R}$-\emph{trivial} monoid may be seen as a monoid that is farthest removed from being a group. That is, a monoid is called $\mathcal{R}$-trivial if the sets $\sigma\mathcal{M}$ are different for all $\sigma \in \mathcal{M}$. Likewise, a monoid is called $\mathcal{L}$-\emph{trivial} if the sets $\mathcal{M}\sigma$ are all different. \\
\indent The authors of \cite{Rtriv} provide an algorithm for finding a complete system of primitive orthogonal idempotents for the algebra $\C\mathcal{M}$, where $\mathcal{M}$ is any finite $\mathcal{R}$-trivial monoid. Their method uses a  technical result, which relates $\mathcal{R}$-trivial monoids to so-called weakly ordered monoids. By reinterpreting $\mathcal{R}$-trivial monoids as networks, which may in turn be represented by adjacency matrices, we are able to provide an alternative algorithm. The advantage of this approach is that we will only need basic combinatorial properties of upper triangular matrices. Moreover, due to the elementary nature of our techniques, our algorithm may be used to provide a complete system of primitive orthogonal idempotents for $R\mathcal{M}$, with $R$ any ring for which a complete system of primitive orthogonal idempotents is known. In fact, such a system for $R\mathcal{M}$ may be obtained immediately by combining the one our algorithm produces for $\Z\mathcal{M}$ with one for $R$. In particular, performing our algorithm for  $\Z\mathcal{M}$ yields a complete system of primitive orthogonal idempotents for $k\mathcal{M}$, with $k$ any field. More details on the network setting that inspired our approach are given in Remark \ref{netwoork}. The observation that $\mathcal{R}$-trivial monoids may be represented by upper triangular matrices is not new, see for example \cite{monbook}, Corollary 10.9. However, we are not aware of anyone using this fact to decompose $R\mathcal{M}$ in this straightforward manner. \\
\indent The remainder of this paper is organised as follows. In Section \ref{Preliminaries} we introduce the basic notions that we will be using throughout the paper. In Section \ref{realisisa} we then give an explicit realisation of $R\mathcal{M}$ as upper triangular matrices with entries in the ring $R$. Next, we describe our algorithm for $\Z\mathcal{M}$ in Section \ref{zet}, and we show in Section \ref{gennn} how this result leads to a complete system of primitive orthogonal idempotents for $R\mathcal{M}$. Finally, we illustrate our algorithm with a small example in Section \ref{exammp}.

%which in turn can be directly related to a large Jordan block in systems of the form \eqref{aa1}. Incidentally, maps of the form \eqref{aa1} are also exactly all maps that are equivariant with respect to the left-regular representation of a certain finite monoid $\mathcal{M}$. In fact, Figure \ref{fig1} also shows the Cayley graph of $\mathcal{M}$, if one reverses the direction of the arrows.  \\
%It is known that a (real or complex) linear map that commutes with the linear action of a finite group is necessarily semisimple. Consequently, the large Jordan block in linear systems of the form \eqref{aa1}, and with that its unusual dynamical behaviour, can be seen as a result of the fact that the underlying symmetry monoid is not a group. It may come as no surprise at this point that the monoid corresponding to Figure \ref{fig1} is both $\mathcal{R}$ and $\mathcal{L}$-trivial.
%In summary, studying the representation theory of monoids that are 'far away from groups' may explain dynamical behaviour that is richer in nature than what is usually associated to classical group symmetry.
%The authors of \cite{Rtriv} provide plenty of motivation for studying $\mathcal{R}$ and $\mathcal{L}$-trivial monoids. We will present here an additional application, that ultimately lead to the results in this paper. A \emph{network dynamical system} is an ordinary differential equation in which the variables depend on each other according to an underlying directed graph (the network). See for example, the networkA special class of such networks 

\section{Preliminaries}\label{Preliminaries}
Recall that a monoid $\mathcal{M}$ is a set, together with an associative multiplication $\mathcal{M} \times \mathcal{M} \rightarrow \mathcal{M}$ and a unit $e \in \mathcal{M}$ such that $e \sigma = \sigma  e = \sigma$ for all $\sigma \in \mathcal{M}$. A monoid is called $\mathcal{R}$-\emph{trivial} if the sets $\sigma\mathcal{M}:= \{\sigma \mu \, \mid \, \mu \in \mathcal{M}\}$ are different for all $\sigma \in \mathcal{M}$. That is, when $\sigma\mathcal{M} = \tau\mathcal{M}$ for $\sigma,\tau \in \mathcal{M}$ implies $\sigma = \tau$. 
%Given a monoid $M$, we may form the corresponding \emph{opposite monoid} $M^{op}$, which has the same set and unit as $M$, but with multiplication given by $x ^{op} y := y  x$. It can easily be seen that  a monoid $M$ is $\mathcal{R}$-\emph{trivial}, if and only if $M^{op}$ is $\mathcal{L}$-\emph{trivial}.
\begin{defi}
Let $R$ denote any ring (commutative or otherwise) with $1 \not= 0$, and let $\mathcal{M}$ be a finite monoid. We define the $R$-algebra of formal sums
\begin{equation}
R\mathcal{M}= \left\{ \left. \sum_{\mu \in \mathcal{M}} r_{\mu}\cdot \mu \, \right|  r_{\mu} \in R \hspace{4pt} \forall \, \mu \in \mathcal{M} \right\} 
\end{equation}
with coordinate-wise addition, i.e.
\begin{equation}
\sum_{\mu \in \mathcal{M}} r_{\mu} \cdot \mu  + \sum_{\mu \in \mathcal{M}} s_{\mu} \cdot \mu = \sum_{\mu \in \mathcal{M}} (r_{\mu} + s_{\mu}) \cdot \mu \, ,
\end{equation}
and multiplication 
\begin{align}
\left(\sum_{\sigma \in \mathcal{M}} r_{\sigma} \cdot \sigma \right)\cdot \left( \sum_{\tau \in \mathcal{M}} s_{\tau} \cdot \tau \right) &=  \sum_{\sigma \in \mathcal{M}} \sum_{\tau \in \mathcal{M}} (r_{\sigma} s_{\tau})\cdot [\sigma  \tau] = \sum_{\mu \in \mathcal{M}} \left( \sum_{\substack{\sigma, \tau \in \mathcal{M} \\ \sigma  \tau = \mu}} r_{\sigma} s_{\tau}  \right) \cdot \mu\, .
\end{align} 
The ring $R$ furthermore acts on $R\mathcal{M}$ from the left by identifying $R$ with $Re \subset R\mathcal{M}$. That is, we have
\begin{equation}
r \cdot \left(\sum_{\sigma \in \mathcal{M}} r_{\sigma} \cdot \sigma \right) = (r\cdot e) \cdot \left(\sum_{\sigma \in \mathcal{M}} r_{\sigma} \cdot \sigma \right) = \sum_{\sigma \in \mathcal{M}} (rr_{\sigma})\cdot \sigma\, ,
\end{equation}
for $r \in R$.  Similarly, $R$ acts on $R\mathcal{M}$ from the right. Finally, if we are given $\sigma \in \mathcal{M}$, we will often simply write $\sigma$ and $-\sigma$ for $1\cdot \sigma \in R\mathcal{M}$ and $-1\cdot \sigma \in R\mathcal{M}$, respectively. $\hfill \triangle$
\end{defi}
\noindent Just as in \cite{Rtriv}, our goal is to find a complete system of primitive orthogonal idempotents for $R\mathcal{M}$ when $\mathcal{M}$ is $\mathcal{R}$-trivial. That is, we are looking for a finite set of non-zero elements $E_1, \dots, E_p \subset R\mathcal{M}$ such that 
\begin{enumerate}\label{cspoi}
\item Each $E_i$ is idempotent. That is, $E_i ^2 = E_i$ for all $i \in \{1, \dots, p\}$.
\item The $E_i$ are orthogonal, meaning that $E_i \cdot E_j = 0$ for all $i,j \in \{1, \dots, p\}$ with $i \not=j$.
\item The $E_i$ are primitive. In other words, if $E_i = X + Y$ for some $X,Y \in R\mathcal{M}$ with $X^2 = X$, $Y^2 = Y$ and $XY = YX = 0$, then either $X=0$ or $Y=0$.
\item It holds that $\sum_{i=1}^p E_i = e$ (completeness).
\end{enumerate}
We will furthermore assume that we are given a complete system of primitive orthogonal idempotents $\epsilon_1, \dots, \epsilon_q$ for $R$ (that is, $\epsilon_1$ till $\epsilon_q$ are non-zero elements of $R$ satisfying the four conditions above, with $X,Y \in R$ in point 3 and $\epsilon_1 + \dots + \epsilon_q = 1$ in point 4). Note that for any field, and more generally for any domain, a complete system of primitive orthogonal idempotents is given by only the element $1$. This is because the only idempotents in a domain are $0$ and $1$. 

\section{Realisation as upper triangular matrices}\label{realisisa}
From here on out, $\mathcal{M}$ will always denote a finite $\mathcal{R}$-trivial monoid. We will analyse $R\mathcal{M}$ by showing that it is isomorphic to an $R$-algebra of upper triangular matrices. More precisely, we have the following definition.
\begin{defi}\label{theuuu}
Let $R$ be a ring with $1 \not= 0$. We denote by $\mat(R,n)$ the $R$-algebra of all $n \times n$ matrices with entries in $R$. More precisely,  multiplication in $\mat(R,n)$ is given by
\begin{equation}
(A\cdot B)_{i,j} = \sum_{k = 1}^n A_{i,k}B_{k,j} \, ,
\end{equation}
for $A,B \in \mat(R,n)$ and $i,j \in \{1, \dots, n\}$. Addition is defined entry-wise, and $R$ acts on $\mat(R,n)$ by $r \cdot A \cdot s = (r\Id) A (s\Id)$ for $r,s \in R$. Here we use the matrix $r\Id$ given by $(r\Id)_{i,j} = r\delta_{i,j}$, with $\delta_{i,j}$ the Kronecker delta function. Given $r \in R$ and $A \in \mat(R,n)$ we will simply write $rA$ for $r \cdot A$. $\hfill \triangle$
\end{defi}
\noindent Next, we set $n:= \#\mathcal{M}$ and we choose an ordering of the elements in $\mathcal{M}$, say $\mathcal{M} = \{\sigma_1, \dots, \sigma_n \}$. Given $\sigma_i, \sigma_j \in \mathcal{M}$, we may write $A_{\sigma_i, \sigma_j} := A_{i,j}$ for $A \in \mat(R,n)$. In particular, this means we may write $A_{\sigma, \tau}$ for any two elements $\sigma, \tau \in \mathcal{M}$. Using this notation, we define elements $U_{\sigma} \in \mat(R,n)$ for $\sigma \in \mathcal{M}$ by
\begin{equation}
(U_{\sigma})_{\tau, \kappa} = \delta_{\tau  \sigma, \kappa } = \begin{cases} 1 &\mbox{if }\tau  \sigma = \kappa \\0 &\mbox{otherwise,}  \end{cases}\, 
\end{equation}
for all $\tau, \kappa \in \mathcal{M}$. \\

\noindent The following lemma, together with Lemma \ref{key2}, may be considered the key observation of this paper. It is motivated by a construction from the field of network dynamical systems, called the \emph{fundamental network}, see for example \cite{fibr}.
\begin{lem} \label{key1}
The map 
\begin{align}
\Psi: \,&R\mathcal{M} \rightarrow \mat(R,n)\\ \nonumber
&\sum_{\mu \in \mathcal{M}} r_{\mu} \cdot \mu \mapsto \sum_{\mu \in \mathcal{M}} r_{\mu}U_{\mu} 
\end{align}
is an injective morphism of unitary $R$-algebras.
\end{lem}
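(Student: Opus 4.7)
The plan is to verify in turn that $\Psi$ is $R$-linear, sends the unit of $R\mathcal{M}$ to the identity matrix, preserves multiplication, and is injective. The first is immediate from the definition of $\Psi$ as the $R$-linear extension of $\mu \mapsto U_\mu$. For the second, observe that $(U_e)_{\tau,\kappa} = \delta_{\tau e, \kappa} = \delta_{\tau,\kappa}$, so $U_e = \Id$.

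The main computational step is showing that $\Psi$ is multiplicative. By $R$-bilinearity of both sides, it suffices to check on the basis, i.e.\ to verify that $U_\sigma U_\tau = U_{\sigma\tau}$ for all $\sigma,\tau \in \mathcal{M}$. For arbitrary $\alpha, \beta \in \mathcal{M}$ I would compute
\begin{equation}
(U_\sigma U_\tau)_{\alpha,\beta} \;=\; \sum_{\gamma \in \mathcal{M}} (U_\sigma)_{\alpha,\gamma}(U_\tau)_{\gamma,\beta} \;=\; \sum_{\gamma \in \mathcal{M}} \delta_{\alpha\sigma,\gamma}\,\delta_{\gamma\tau,\beta}.
\end{equation}
Exactly one term survives, namely the one with $\gamma = \alpha\sigma$, and it evaluates to $\delta_{(\alpha\sigma)\tau,\beta}$. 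Associativity of the monoid operation rewrites this as $\delta_{\alpha(\sigma\tau),\beta} = (U_{\sigma\tau})_{\alpha,\beta}$, as desired. Combined with the matching formula for multiplication in $R\mathcal{M}$ stated in the Definition, this gives $\Psi(xy) = \Psi(x)\Psi(y)$ for arbitrary $x,y \in R\mathcal{M}$.

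For injectivity, the key trick is to read off coefficients from a single distinguished row, namely the row indexed by $e$. For any element $x = \sum_{\mu \in \mathcal{M}} r_\mu \cdot \mu$, one computes
\begin{equation}
\Psi(x)_{e,\kappa} \;=\; \sum_{\mu \in \mathcal{M}} r_\mu (U_\mu)_{e,\kappa} \;=\; \sum_{\mu \in \mathcal{M}} r_\mu \delta_{e\mu,\kappa} \;=\; r_\kappa,
\end{equation}
for every $\kappa \in \mathcal{M}$. Hence $\Psi(x) = 0$ forces $r_\kappa = 0$ for all $\kappa$, so $x = 0$.

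I do not expect any real obstacle here: nothing in this lemma uses $\mathcal{R}$-triviality, only associativity of $\mathcal{M}$ and the definition of $U_\sigma$. The only point that requires a moment's care is the bookkeeping in the product $U_\sigma U_\tau$, where one must be careful that the sum collapses to a \emph{single} surviving term (this uses that $\gamma = \alpha\sigma$ is uniquely determined in $\mathcal{M}$), and that associativity is what converts $(\alpha\sigma)\tau$ into $\alpha(\sigma\tau)$ so as to match the defining formula for $U_{\sigma\tau}$.
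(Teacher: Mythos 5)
Your argument is correct and follows essentially the same route as the paper: the identity $U_\sigma U_\tau = U_{\sigma\tau}$ via the Kronecker-delta computation, $U_e = \Id$, and injectivity by reading the coefficients off the row indexed by $e$. The one point your appeal to ``$R$-bilinearity'' glosses over is that $R$ is allowed to be non-commutative, so reducing multiplicativity to basis elements requires moving the scalar $s$ in $rU_\sigma\, s U_\tau$ past $U_\sigma$; the paper justifies this explicitly by noting that each $U_\mu$ has entries only $0$ and $1$ and hence commutes with $s\Id$.
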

\begin{proof}
First, we claim that $U_{\sigma}U_{\tau} = U_{\sigma  \tau}$ for all $\sigma, \tau \in \mathcal{M}$. Indeed, a calculation shows that
\begin{align}
(U_{\sigma}U_{\tau})_{\mu, \kappa} &= \sum_{\rho \in \mathcal{M}} (U_{\sigma})_{\mu, \rho}(U_{\tau})_{\rho, \kappa} = \sum_{\rho \in \mathcal{M}} \delta_{\mu  \sigma, \rho} \delta_{\rho  \tau,  \kappa} =  \delta_{\mu  \sigma  \tau, \kappa} = (U_{\sigma   \tau})_{\mu, \kappa} \, ,
\end{align}
for all $\mu, \kappa\in \mathcal{M}$. Next, we note that each of the matrices $U_{\sigma}$ for $\sigma \in \mathcal{M}$ has only entries given by $0$ and $1$. As a result, $U_{\sigma}$ commutes with the diagonal matrix $r\Id$ for all $r \in R$, and we conclude that $\Psi$ respects the left and right action of $R$. It moreover follows that 
\begin{equation}
\Psi(r\cdot \sigma)\Psi( s\cdot \tau)  = rU_{\sigma}sU_{\tau} = rsU_{\sigma}U_{\tau} = rsU_{\sigma\tau}  = \Psi(rs\cdot \sigma\tau) = \Psi((r\cdot \sigma)( s\cdot \tau) )\, ,
\end{equation}
for all $r,s \in R$ and $\sigma, \tau \in \mathcal{M}$. From the fact that $\Psi(x) + \Psi(y) = \Psi(x+y)$ for all $x,y \in R\mathcal{M}$, we conclude that in fact $\Psi(x)\Psi(y) = \Psi(xy)$ for general $x,y \in R\mathcal{M}$.  Note that we furthermore have $U_e = \Id$. Finally, it remains to show that $\Psi$ is injective. To this end, we see that
\begin{equation}
\left(\sum_{\mu \in \mathcal{M}} r_{\mu}U_{\mu}\right)_{\hspace{-0.15cm}e, \kappa} \hspace{-0.1cm}= \sum_{\mu \in \mathcal{M}} (r_{\mu}U_{\mu})_{e, \kappa} = \sum_{\mu \in \mathcal{M}} r_{\mu}\delta_{\mu,\kappa} = r_{\kappa}\, ,
\end{equation}
for all $\kappa \in \mathcal{M}$. It follows that $\Psi(x) = 0$ for $x \in R\mathcal{M}$ necessarily implies $x=0$. This proves the lemma.
\end{proof}
\begin{lem}\label{key2}
There exists an ordering of the elements of $\mathcal{M}$ for which every matrix in the image of $\Psi$ is in upper triangular form.
\end{lem}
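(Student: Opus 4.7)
The plan is to construct the ordering from a natural partial order on $\mathcal{M}$ coming from its right ideals. Define a relation $\preceq$ on $\mathcal{M}$ by declaring $\kappa \preceq \tau$ if and only if $\kappa \in \tau\mathcal{M}$ (equivalently $\kappa\mathcal{M} \subseteq \tau\mathcal{M}$, using that $\kappa = \tau \mu$ entails $\kappa\mathcal{M} = \tau\mu\mathcal{M} \subseteq \tau\mathcal{M}$). Reflexivity follows from $\tau = \tau e$, and transitivity is immediate: if $\kappa = \tau \mu$ and $\tau = \rho \nu$ then $\kappa = \rho(\nu\mu)$. The essential step, which is the main (and only) use of the $\mathcal{R}$-trivial hypothesis, is antisymmetry: if $\kappa \preceq \tau$ and $\tau \preceq \kappa$ then $\kappa\mathcal{M} \subseteq \tau\mathcal{M} \subseteq \kappa\mathcal{M}$, so $\tau\mathcal{M} = \kappa\mathcal{M}$, and $\mathcal{R}$-triviality forces $\tau = \kappa$.

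Next, I would invoke the standard fact that any finite partial order admits a linear extension in which $\preceq$-larger elements appear first. Concretely, repeatedly remove a maximal element and place it next in the list; this terminates because $\mathcal{M}$ is finite. This produces an enumeration $\mathcal{M} = \{\sigma_1, \dots, \sigma_n\}$ with the key property that $\sigma_j \preceq \sigma_i$ implies $i \leq j$.

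It then remains to check upper triangularity. Since $\Psi$ is $R$-linear and its image is spanned by the matrices $U_\sigma$, it suffices to verify this for each $U_\sigma$. Suppose $(U_\sigma)_{\sigma_i, \sigma_j} \neq 0$. By the definition of $U_\sigma$ this means $\sigma_i \sigma = \sigma_j$, hence $\sigma_j \in \sigma_i\mathcal{M}$, i.e.\ $\sigma_j \preceq \sigma_i$. By our choice of ordering we conclude $i \leq j$, so all nonzero entries of $U_\sigma$ lie on or above the diagonal.

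The only real subtlety is recognising that $\preceq$ is genuinely a partial order rather than merely a preorder; the $\mathcal{R}$-trivial hypothesis is precisely what rules out a $\preceq$-cycle $\tau \preceq \kappa \preceq \tau$ with $\tau \neq \kappa$. Such a cycle would correspond to a pair of off-diagonal entries in different $U_\sigma$'s that cannot be simultaneously placed above the diagonal, which is exactly the obstruction this hypothesis is designed to remove.
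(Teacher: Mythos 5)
Your proof is correct and follows essentially the same route as the paper: both order the elements compatibly with inclusion of the principal right ideals $\sigma\mathcal{M}$ and invoke $\mathcal{R}$-triviality to rule out distinct elements having the same ideal. The only difference is cosmetic --- the paper produces the ordering by sorting so that $\#\sigma_1\mathcal{M} \geq \dots \geq \#\sigma_n\mathcal{M}$, whereas you take an explicit linear extension (topological sort) of the partial order $\kappa \preceq \tau \iff \kappa\mathcal{M} \subseteq \tau\mathcal{M}$.
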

\begin{proof}
We pick any ordering of $\mathcal{M}$ such that $\#\sigma_1\mathcal{M} \geq \#\sigma_2\mathcal{M} \geq \dots \geq \#\sigma_n\mathcal{M}$. It suffices to show that every matrix $U_{\sigma}$ is upper triangular, and so that $(U_{\sigma})_{\sigma_i, \sigma_j} = 0$ whenever $i > j$. Suppose otherwise, so that $(U_{\sigma})_{\sigma_i, \sigma_j} = 1$ for some $i > j$. By definition of $U_{\sigma}$, we have $\sigma_i  \sigma = \sigma_j$. Hence, we see that $\sigma_j\mathcal{M} = \sigma_i  \sigma \mathcal{M} \subset \sigma_i\mathcal{M}$. As we have $i > j$, it holds that $\#\sigma_j\mathcal{M} \geq \#\sigma_i\mathcal{M}$, and we conclude that $\sigma_i\mathcal{M} = \sigma_j\mathcal{M}$. This of course directly contradicts the definition of an $\mathcal{R}$-trivial monoid, and so we find that every element in the image of $\Psi$ is indeed in upper triangular form. This proves the lemma.
\end{proof}
\noindent We will assume from now on that the elements of $\mathcal{M}$ are ordered as in Lemma \ref{key2}, so that every matrix in the image of $\Psi$ is in upper triangular form. We will denote this image of $R\mathcal{M}$ under $\Psi$ by $\mathcal{U}^R_{\mathcal{M}} \subset \mat(R,n)$. As expected, the diagonal entries of elements in $\mathcal{U}^R_{\mathcal{M}}$ will play a crucial role in finding idempotents in $R\mathcal{M}$. They will motivate the following definition.

\begin{defi}
Given an element $\sigma \in \mathcal{M}$, we define the set 
\begin{equation}
\mathcal{L}_{\sigma} := \{\tau \in \mathcal{M} \, \mid \, \sigma  \tau = \sigma\}\, .
\end{equation}
Note that $e \in \mathcal{L}_{\sigma}$ for all $\sigma \in \mathcal{M}$, so that none of the sets $\mathcal{L}_{\sigma}$ is empty. It may however happen that two of the sets $\mathcal{L}_{\sigma}$ coincide. We will describe when this happens by an equivalence relation $\sim$ on $\mathcal{M}$. In other words, we have $\sigma \sim \kappa \iff \mathcal{L}_{\sigma} = \mathcal{L}_{\kappa}$ for $\sigma, \kappa \in \mathcal{M}$. We denote the class of an element $\sigma \in \mathcal{M}$ under $\sim$ by $[\sigma]$, which we call the \emph{loop-type} of $\sigma$. $\hfill \triangle$
\end{defi}

\begin{remk}\label{netwoork}
The term `loop-type' again comes from an interpretation as coupled cell networks. Specifically, \emph{feed-forward networks} are networks where there is a partial ordering on the nodes, such that every node only `feels' itself and those nodes that are higher in the ordering. Another way of saying this is that the only loops in the network are self-loops. Such networks are known to display unusual switching or amplifying dynamical behaviour, see for example \cite{claire} and \cite{RinkSanders2}. It can furthermore be shown that (under some technical conditions) a network is a feed-forward network, if and only if it is a quotient of the left Cayley graph of an $\mathcal{L}$-trivial monoid (the opposite of an $\mathcal{R}$-trivial monoid). In this more visual approach to monoids, two elements of $\mathcal{M}$ have the same loop-type if and only if their corresponding nodes in the Cayley graph have the same self-loops. \\
\indent More generally, large classes of networks may be realised as quotients of the left Cayley graph of a finite monoid, or of graphs encoding more general algebraic structure. This observation enables one to translate network structure on a dynamical system to so-called \emph{hidden symmetry}. In turn, this allows one to preserve network structure along many dynamical reduction techniques, and to offer explanations for the various invariant spaces, spectral degeneracies and unusual bifurcations often observed in network dynamical systems. See  \cite{fibr, RinkSanders2,  RinkSanders3, CCN,  cen, proj, Gensym, Schwenker} for more on this formalism. $\hfill \triangle$
\end{remk}

\begin{remk}\label{Uiffff}
Note that for any two elements $\kappa, \sigma \in \mathcal{M}$ we have $(U_{\kappa})_{\sigma, \sigma} = \delta_{\sigma\kappa, \sigma}$. This gives the useful observation that $(U_{\kappa})_{\sigma, \sigma} = 1$ if and only if we have $\kappa \in \mathcal{L}_{\sigma}$. $\hfill \triangle$
\end{remk}

\begin{lem}\label{diagisloop}
Let $U = \sum_{\mu \in \mathcal{M}} r_{\mu}U_{\mu}$ be an element of $\mathcal{U}^R_{\mathcal{M}}$ and let $\sigma \in \mathcal{M}$ be given. The $\sigma$-diagonal entry $U_{\sigma, \sigma}$ of $U$ is given by
\begin{equation}
U_{\sigma, \sigma} = \sum_{\mu \in \mathcal{L}_{\sigma}} r_{\mu}\, .
\end{equation}
In particular, two elements $\sigma, \tau \in \mathcal{M}$ have the same loop-type, if and only if for every matrix $U \in\mathcal{U}^R_{\mathcal{M}}$ the $\sigma$-diagonal entry of $U$ equals the $\tau$-diagonal entry of $U$. \end{lem}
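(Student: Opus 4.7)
The plan is to prove both assertions by direct computation, using Remark \ref{Uiffff} as the key input.

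For the first identity, I would simply expand $U_{\sigma,\sigma}$ by linearity of the matrix entry with respect to the sum:
\begin{equation*}
U_{\sigma,\sigma} = \sum_{\mu \in \mathcal{M}} r_\mu (U_\mu)_{\sigma,\sigma}.
\end{equation*}
By Remark \ref{Uiffff}, the factor $(U_\mu)_{\sigma,\sigma}$ equals $1$ exactly when $\mu \in \mathcal{L}_\sigma$ and vanishes otherwise. Restricting the sum to those terms that survive yields the desired formula $U_{\sigma,\sigma} = \sum_{\mu \in \mathcal{L}_\sigma} r_\mu$.

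For the biconditional, the forward direction is immediate from the formula just derived: if $\sigma \sim \tau$, then by definition $\mathcal{L}_\sigma = \mathcal{L}_\tau$, so the two sums $\sum_{\mu \in \mathcal{L}_\sigma} r_\mu$ and $\sum_{\mu \in \mathcal{L}_\tau} r_\mu$ are literally identical for every choice of coefficients, hence for every $U \in \mathcal{U}^R_{\mathcal{M}}$.

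For the reverse direction, I would specialize the hypothesis to the distinguished elements $U = U_\kappa \in \mathcal{U}^R_{\mathcal{M}}$, one for each $\kappa \in \mathcal{M}$. Since $(U_\kappa)_{\sigma,\sigma} = (U_\kappa)_{\tau,\tau}$ by assumption, Remark \ref{Uiffff} combined with $1 \neq 0$ in $R$ forces $\kappa \in \mathcal{L}_\sigma$ if and only if $\kappa \in \mathcal{L}_\tau$. As this holds for every $\kappa \in \mathcal{M}$, we conclude $\mathcal{L}_\sigma = \mathcal{L}_\tau$, i.e.\ $\sigma \sim \tau$. There is no real obstacle here; the only point requiring a moment of care is that $1 \neq 0$ in $R$ is needed in order to separate the two possible values of $(U_\kappa)_{\sigma,\sigma}$, and this is part of the standing assumption on $R$.
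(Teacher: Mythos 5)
Your proposal is correct and follows essentially the same route as the paper: expand $U_{\sigma,\sigma}$ by linearity using Remark \ref{Uiffff} (equivalently $\delta_{\sigma\mu,\sigma}$), and for the converse test the hypothesis against the matrices $U_\kappa$ to compare $\mathcal{L}_\sigma$ and $\mathcal{L}_\tau$, the paper phrasing this contrapositively. Your explicit remark that $1\neq 0$ in $R$ is what separates the diagonal values is a fine (implicitly used) point.
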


\begin{proof}
The lemma follows directly from the definition of $U_{\sigma}$. Specifically, we find
\begin{equation}
U_{\sigma, \sigma} =  \sum_{\mu \in \mathcal{M}} r_{\mu}(U_{\mu})_{\sigma, \sigma} = \sum_{\mu \in \mathcal{M}} r_{\mu}\delta_{\sigma  \mu, \sigma} = \sum_{\mu \in \mathcal{L}_{\sigma}} r_{\mu}\, .
\end{equation}
It follows that $U_{\sigma, \sigma} = U_{\tau, \tau}$ whenever $[\sigma] = [\tau]$. Conversely, if $[\sigma] \not= [\tau]$ then we may choose $\kappa \in \mathcal{M}$ such that $\kappa \in \mathcal{L}_{\sigma}$ but $\kappa \notin \mathcal{L}_{\tau}$ (without loss of generality). By Remark \ref{Uiffff} we find $(U_{\kappa})_{\sigma, \sigma} = 1$, whereas $(U_{\kappa})_{\tau, \tau} = 0$. This finishes the proof.
\end{proof}
\noindent We end this section with a result that may greatly simplify determining loop-type equivalence. To this end, we need the following important lemma.

\begin{lem}\label{multi}
For  $\sigma, \tau_1, \tau_2 \in \mathcal{M}$ we have $\tau_1, \tau_2 \in \mathcal{L}_{\sigma}$ if and only if $\tau_1  \tau_2 \in \mathcal{L}_{\sigma}$.
\end{lem}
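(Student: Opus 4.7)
The statement is an "if and only if" with two very asymmetric directions. The forward direction ($\tau_1, \tau_2 \in \mathcal{L}_\sigma \Rightarrow \tau_1\tau_2 \in \mathcal{L}_\sigma$) is just associativity: if $\sigma\tau_1 = \sigma$ and $\sigma\tau_2 = \sigma$, then $\sigma(\tau_1\tau_2) = (\sigma\tau_1)\tau_2 = \sigma\tau_2 = \sigma$. I would dispatch this in one line.

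The interesting direction is the converse, and this is where $\mathcal{R}$-triviality must enter, because for a general monoid one can easily have $\sigma\tau_1\tau_2 = \sigma$ without having $\sigma\tau_1 = \sigma$ (think of a group, where $\mathcal{L}_\sigma = \{e\}$ for every $\sigma$). My plan is to use the principal right ideals, exactly as in the definition of $\mathcal{R}$-triviality recalled in the preliminaries. Assuming $\sigma\tau_1\tau_2 = \sigma$, I consider the chain
\begin{equation}
\sigma\mathcal{M} \;\supseteq\; \sigma\tau_1\mathcal{M} \;\supseteq\; \sigma\tau_1\tau_2\mathcal{M} \;=\; \sigma\mathcal{M},
\end{equation}
where the last equality uses $\sigma\tau_1\tau_2 = \sigma$. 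The inclusions must therefore be equalities, and in particular $\sigma\mathcal{M} = (\sigma\tau_1)\mathcal{M}$. By $\mathcal{R}$-triviality of $\mathcal{M}$, the equality of principal right ideals forces $\sigma = \sigma\tau_1$, i.e. $\tau_1 \in \mathcal{L}_\sigma$. Once this is in hand, applying it to the hypothesis gives $\sigma = \sigma\tau_1\tau_2 = \sigma\tau_2$, so $\tau_2 \in \mathcal{L}_\sigma$ as well.

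I do not anticipate a genuine obstacle here. The only point that deserves care is making sure the chain of inclusions is written with the correct order of containments and that the rightmost equality really is the hypothesis $\sigma\tau_1\tau_2 = \sigma$ (and not the thing we are trying to prove). The essential content is the single step "equality of $\sigma\mathcal{M}$ and $\sigma\tau_1\mathcal{M}$ implies $\sigma = \sigma\tau_1$," which is exactly the defining property of an $\mathcal{R}$-trivial monoid; everything else is associativity.
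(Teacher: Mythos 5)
Your proof is correct, but it follows a genuinely different route from the paper's. The paper never leaves the matrix picture: it invokes Remark \ref{Uiffff} ($(U_{\kappa})_{\sigma,\sigma}=1$ iff $\kappa\in\mathcal{L}_{\sigma}$), notes that all entries of the $U_{\mu}$ are $0$ or $1$, and then uses the fact that diagonal entries of upper triangular matrices are multiplicative, $(U_{\tau_1\tau_2})_{\sigma,\sigma}=(U_{\tau_1})_{\sigma,\sigma}(U_{\tau_2})_{\sigma,\sigma}$, so both directions of the equivalence drop out of a single identity in $\{0,1\}$; the $\mathcal{R}$-triviality hypothesis is hidden inside Lemma \ref{key2}, which is what made the matrices upper triangular in the first place. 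You instead argue intrinsically in the monoid: the easy direction is associativity, and for the converse you collapse the chain
\begin{equation}
\sigma\mathcal{M}\supseteq\sigma\tau_1\mathcal{M}\supseteq\sigma\tau_1\tau_2\mathcal{M}=\sigma\mathcal{M},
\end{equation}
so that $\sigma\mathcal{M}=(\sigma\tau_1)\mathcal{M}$, and $\mathcal{R}$-triviality forces $\sigma\tau_1=\sigma$, after which $\sigma\tau_2=\sigma\tau_1\tau_2=\sigma$. Your version is more elementary and self-contained: it needs no ordering of $\mathcal{M}$, no realisation by matrices, and it makes visible exactly where (and that) $\mathcal{R}$-triviality is indispensable — as you rightly observe, the statement fails in a nontrivial group. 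What the paper's version buys is economy within its own framework: the multiplicativity of diagonal entries is the workhorse of the whole algorithm (it reappears in Lemma \ref{idemz} and Proposition \ref{diagfix}), so phrasing this lemma in the same diagonal calculus keeps a single mechanism doing all the work. Both arguments are complete and correct.
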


\begin{proof}
By Remark \ref{Uiffff}, we have $(U_{\tau_i})_{\sigma, \sigma} = 1$ if and only if $\tau_i \in \mathcal{L}_{\sigma}$, for $i=1,2$. Note also that any entry of $U_{\tau_i}$ is either $0$ or $1$. The statement of the lemma now follows immediately from the identity $(U_{\tau_1  \tau_2})_{\sigma, \sigma} = (U_{\tau_1})_{\sigma, \sigma} (U_{\tau_2})_{\sigma, \sigma} $, which holds because the matrices $U_{\mu}$ for $\mu \in \mathcal{M}$ are all upper triangular. \end{proof}
\noindent We now assume we are given a generating set $\mathcal{S}$ for $\mathcal{M}$. In what follows, we write
\begin{equation}
\mathcal{L}_{\sigma}^{\mathcal{S}} := \mathcal{L}_{\sigma} \cap \mathcal{S} =  \{\kappa \in \mathcal{S} \, \mid \, \sigma  \kappa = \sigma\}\, ,
\end{equation}
for an element $\sigma \in \mathcal{M}$.
\begin{prop}\label{loopiss}
Let $\mathcal{S}$ be a generating set for $\mathcal{M}$. Two elements $\sigma, \sigma' \in \mathcal{M}$ have the same loop-type if and only if the sets $\mathcal{L}_{\sigma}^{\mathcal{S}}$ and $\mathcal{L}_{\sigma'}^{\mathcal{S}}$ agree.
\end{prop}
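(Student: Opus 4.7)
The forward implication is immediate: if $[\sigma]=[\sigma']$, then $\mathcal{L}_\sigma=\mathcal{L}_{\sigma'}$, and intersecting both sides with $\mathcal{S}$ yields $\mathcal{L}_\sigma^\mathcal{S}=\mathcal{L}_{\sigma'}^\mathcal{S}$. So the real content is the converse, and my plan is to reduce membership of an arbitrary element $\mu\in\mathcal{L}_\sigma$ to membership of the generators in a factorisation of $\mu$, using Lemma \ref{multi}.

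The key observation is that Lemma \ref{multi} extends by induction on the length of a product. Concretely, I would first establish the following: for any $\sigma\in\mathcal{M}$ and any $\tau_1,\dots,\tau_k\in\mathcal{M}$, the product $\tau_1\cdots\tau_k$ lies in $\mathcal{L}_\sigma$ if and only if each individual $\tau_i$ lies in $\mathcal{L}_\sigma$. The base case $k=1$ is trivial and $k=2$ is exactly Lemma \ref{multi}. For the inductive step, write $\tau_1\cdots\tau_k = \tau_1\cdot(\tau_2\cdots\tau_k)$ and apply Lemma \ref{multi} once, then use the induction hypothesis on $\tau_2\cdots\tau_k$. This gives a clean "membership is testable on factors" principle.

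Now assume $\mathcal{L}_\sigma^\mathcal{S}=\mathcal{L}_{\sigma'}^\mathcal{S}$ and take an arbitrary $\mu\in\mathcal{M}$. Since $\mathcal{S}$ generates $\mathcal{M}$, we may write $\mu = s_1\cdots s_k$ with $s_1,\dots,s_k\in\mathcal{S}$ (the case $\mu=e$ corresponds to the empty product, and $e\in\mathcal{L}_\sigma\cap\mathcal{L}_{\sigma'}$ automatically, so it causes no trouble). By the extended version of Lemma \ref{multi},
\begin{equation}
\mu\in\mathcal{L}_\sigma \iff s_i\in\mathcal{L}_\sigma \text{ for all } i \iff s_i\in\mathcal{L}_\sigma^\mathcal{S} \text{ for all } i,
\end{equation}
and similarly with $\sigma'$ in place of $\sigma$. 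The hypothesis $\mathcal{L}_\sigma^\mathcal{S}=\mathcal{L}_{\sigma'}^\mathcal{S}$ then forces $\mu\in\mathcal{L}_\sigma \iff \mu\in\mathcal{L}_{\sigma'}$. Since $\mu$ was arbitrary, $\mathcal{L}_\sigma=\mathcal{L}_{\sigma'}$, i.e.\ $[\sigma]=[\sigma']$.

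I do not anticipate any real obstacle here; the only subtle point is making sure the inductive extension of Lemma \ref{multi} is carried out correctly and that the empty-word case (which produces $\mu=e$) is acknowledged. Everything else is a direct consequence of that extension together with the hypothesis that $\mathcal{S}$ generates $\mathcal{M}$.
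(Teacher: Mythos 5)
Your proposal is correct and follows essentially the same route as the paper: the forward direction by intersecting with $\mathcal{S}$, and the converse by factoring an arbitrary element of $\mathcal{L}_{\sigma}$ into generators and applying Lemma \ref{multi} repeatedly (your explicit induction and the empty-word remark for $e$ are just slightly more spelled-out versions of the paper's ``repeated application''). The only cosmetic difference is that you prove the two-sided equivalence $\mu\in\mathcal{L}_{\sigma}\iff\mu\in\mathcal{L}_{\sigma'}$ directly, whereas the paper proves one inclusion and invokes symmetry.
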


\begin{proof}
If $\sigma$ and $\sigma'$ have the same loop-type, then $\mathcal{L}_{\sigma} = \mathcal{L}_{\sigma'}$, so that $\mathcal{L}_{\sigma} \cap \mathcal{S} = \mathcal{L}_{\sigma'} \cap \mathcal{S}$. \\
Conversely, suppose that $\mathcal{L}_{\sigma}^{\mathcal{S}} = \mathcal{L}_{\sigma'}^{\mathcal{S}}$. Let $\tau \in \mathcal{L}_{\sigma}$ be given, and write $\tau = \tau_1  \dots  \tau_k$ for some generators $\tau_1, \dots, \tau_k \in \mathcal{S}$. By repeated application of Lemma \ref{multi} we find $\tau_1, \dots, \tau_k \in \mathcal{L}_{\sigma}$, and so $\tau_1, \dots, \tau_k \in \mathcal{L}_{\sigma}^{\mathcal{S}}$. We conclude that $\tau_1, \dots, \tau_k \in \mathcal{L}_{\sigma'}^{\mathcal{S}} \subset  \mathcal{L}_{\sigma'}$. Again by repeated application of Lemma \ref{multi} we find $\tau = \tau_1  \dots  \tau_k \in  \mathcal{L}_{\sigma'}$, and we conclude that $\mathcal{L}_{\sigma} \subset \mathcal{L}_{\sigma'}$ By reversing the roles of $\sigma$ and $\sigma'$ we arrive at $\mathcal{L}_{\sigma} = \mathcal{L}_{\sigma'}$, so that indeed $[\sigma] = [\sigma']$. This completes the proof.
\end{proof}

\section{The case $R = \Z$}\label{zet}
In this section we present the algorithm for finding a complete system of primitive orthogonal idempotents in the case of $R = \Z$. We then show in the next section how to use this to find a complete system of primitive orthogonal idempotents for a general choice of $R$. As expected, an important role will be played by the diagonal of an element of  $\mathcal{U}^{\Z}_{\mathcal{M}}$. We start by fixing some notation, after which we list some easy observations.

\begin{defi}\label{defip}
Given an element $U \in \mathcal{U}^{\Z}_{\mathcal{M}}$, we define the set 
\begin{equation}
\mathcal{D}_U := \{\sigma \in \mathcal{M}\, \mid \, U_{\sigma, \sigma} = 1\}\, .
\end{equation}
We also write $d_U := \#\mathcal{D}_U$ for the number of ones on the diagonal of $U$, and we set $n = \#\mathcal{M}$ as before. Note that $d_U$ may be described independently of the identification between $\Z\mathcal{M}$ and $\mathcal{U}^{\Z}_{\mathcal{M}}$, for example by
\begin{equation}
d_U = \#\left\{\sigma \in \mathcal{M} \, \left| \, \sum_{\tau \in \mathcal{L}_{\sigma}} r_{\tau} =  1 \right. \right\}\, ,
\end{equation}
where
\begin{equation}
U = \sum_{\tau \in \mathcal{M}} r_{\tau} \cdot \tau\, .
\end{equation}
This follows directly from Lemma \ref{diagisloop}. Lastly, we define 
\begin{equation}
\mathcal{P}_{a,b}(U) := \Id-(\Id-U^{a})^b \in \mathcal{U}^{\Z}_{\mathcal{M}}\,
\end{equation}
for all $U \in \mathcal{U}^{\Z}_{\mathcal{M}}$ and $a,b \in \Z_{\geq 0}$. $\hfill \triangle$
\end{defi}

\begin{lem}\label{idemz}
\leavevmode
\begin{enumerate}
\item An idempotent element of  $\mathcal{U}^{\Z}_{\mathcal{M}}$ has only zeroes and ones on its diagonal.
\item If an idempotent element of  $\mathcal{U}^{\Z}_{\mathcal{M}}$ has only ones on its diagonal at those places that correspond to a single loop-type, then it is primitive. More precisely, if  						$E \in \mathcal{U}^{\Z}_{\mathcal{M}}$ is an idempotent such that the set $\mathcal{D}_E$ is a single equivalence class under $\sim$, then $E = X + Y$ with $X$ and $Y$ idempotent implies $X = 0$ or 		$Y = 0$.
\item Let $U \in \mathcal{U}^{\Z}_{\mathcal{M}}$ be an element with only zeroes and ones on its diagonal, and suppose we have $a,b \in \Z_{\geq 0}$ such that $a \geq n-d_U$ and $b \geq d_U$. Then $\mathcal{P}_{a,b}(U)$ is an idempotent element with the same diagonal as $U$.
\end{enumerate}
\end{lem}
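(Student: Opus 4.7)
My plan is as follows. Part 1 is immediate from the fact that the diagonal of a product of upper triangular matrices is the entry-wise product of the diagonals: if $U^2 = U$, then each diagonal entry $U_{\sigma,\sigma}$ satisfies $U_{\sigma,\sigma}^2 = U_{\sigma,\sigma}$ in $\Z$, and the only such elements are $0$ and $1$.

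For Part 2 I would write $E = X + Y$ with $X, Y$ idempotent and $XY = YX = 0$, and apply Part 1 to see that $X$ and $Y$ each have $0/1$ diagonals. Since $(X+Y)_{\sigma,\sigma} \in \{0,1\}$, the diagonal ones of $X$ and those of $Y$ are disjoint and partition $\mathcal{D}_E$. The key input is Lemma \ref{diagisloop}, which forces the diagonals of every element of $\mathcal{U}^{\Z}_{\mathcal{M}}$ to be constant on loop-type classes; hence $\mathcal{D}_X$ and $\mathcal{D}_Y$ are each unions of loop-type classes. By assumption $\mathcal{D}_E$ is a single loop-type class, so one of $\mathcal{D}_X, \mathcal{D}_Y$ must be empty. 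If, say, $\mathcal{D}_Y = \emptyset$, then $Y$ is strictly upper triangular and hence nilpotent, and combined with $Y^2 = Y$ this forces $Y = 0$.

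For Part 3, the algebraic core is the Cayley--Hamilton theorem: since $U$ is upper triangular with diagonal entries in $\{0,1\}$, its characteristic polynomial is $t^{n-d_U}(t-1)^{d_U}$, so $U^{n-d_U}(\Id - U)^{d_U} = 0$ and therefore $U^a(\Id - U)^b = 0$ whenever $a \geq n - d_U$ and $b \geq d_U$. Setting $V := \Id - U^a$ and using the factorisation $V = (\Id - U)(\Id + U + \cdots + U^{a-1})$ together with the fact that polynomials in $U$ commute, I would compute
\begin{equation*}
V^{b+1} - V^b \;=\; -V^b U^a \;=\; -(\Id - U)^b\, U^a\, (\Id + U + \cdots + U^{a-1})^b \;=\; 0.
\end{equation*}
By induction this gives $V^{b+k} = V^b$ for all $k \geq 0$, in particular $V^{2b} = V^b$, so $V^b$ is idempotent and hence so is $\mathcal{P}_{a,b}(U) = \Id - V^b$. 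For the diagonal, upper triangularity gives $(V^b)_{\sigma,\sigma} = (V_{\sigma,\sigma})^b$ and $V_{\sigma,\sigma} = 1 - U_{\sigma,\sigma} \in \{0,1\}$, so raising to the $b$-th power leaves it unchanged (the degenerate case $b=0$ forces $d_U = 0$ and $\mathcal{P}_{a,0}(U) = 0$, which still matches the diagonal of $U$); thus $\mathcal{P}_{a,b}(U)$ and $U$ share the same diagonal. The one step I expect to require real thought is recognising that Cayley--Hamilton provides precisely the identity $U^a(\Id - U)^b = 0$ needed to make $V^b$ stabilise under further multiplication by $V$; everything else is then routine polynomial manipulation.
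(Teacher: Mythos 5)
Your proofs of parts 1 and 2 coincide with the paper's: part 1 is the multiplicativity of diagonal entries of upper triangular matrices, and part 2 combines Lemma \ref{diagisloop} (diagonals are constant on loop-type classes) with the fact that a strictly upper triangular idempotent is nilpotent and hence zero; note that the orthogonality $XY=YX=0$ you list as a hypothesis is never actually used, so your argument does prove the lemma in the form stated. For part 3, however, you take a genuinely different route. The paper conjugates $U$ over $\R$ into a Jordan-type block form separating the generalized eigenspaces for the eigenvalues $0$ and $1$, and reads off that $U^a$ kills the nilpotent block while $(\Id-U^a)^b$ kills the unipotent one, so that $\mathcal{P}_{a,b}(U)$ becomes a block projection. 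You instead stay entirely inside $\Z[U]$: Cayley--Hamilton (valid over any commutative ring) gives $U^{n-d_U}(\Id-U)^{d_U}=0$, hence $U^a(\Id-U)^b=0$, and the factorisation $\Id-U^a=(\Id-U)(\Id+U+\dots+U^{a-1})$ yields $V^{b+1}=V^b$ for $V:=\Id-U^a$, so $V^b$, and therefore $\mathcal{P}_{a,b}(U)=\Id-V^b$, is idempotent; the diagonal claim is the same multiplicativity argument as in the paper, and your treatment of the degenerate case $b=0$ is correct (the case $a=0$, which forces $d_U=n$, is equally harmless and worth a parenthetical remark). The paper's argument buys a quick computation-free picture at the cost of passing to real Jordan form; yours buys an argument that never leaves the integers, fits the elementary spirit of the paper, and would transfer verbatim to upper triangular matrices over any commutative ring.
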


\begin{proof}
The first part follows from the fact that diagonal elements of triangular matrices are multiplicative. That is, for all $\sigma \in \mathcal{M}$ and $U, U' \in \mathcal{U}^{R}_{\mathcal{M}}$ we have $(UU')_{\sigma, \sigma} = U_{\sigma, \sigma}U'_{\sigma, \sigma}$. It follows that any diagonal element of an idempotent in $\mathcal{U}^{R}_{\mathcal{M}}$ is an idempotent in $R$. In particular, for $R= \Z$ we find that the diagonal can only contain zeroes and ones.\\
\indent For the second part, suppose that an idempotent $U \in \mathcal{U}^{\Z}_{\mathcal{M}}$ can be written as the sum of two idempotents $X$ and $Y$. It follows from the first part that
\begin{equation}
\mathcal{D}_E = \mathcal{D}_X \sqcup  \mathcal{D}_Y\, .
\end{equation}
However, Lemma \ref{diagisloop} tells us that diagonal entries corresponding to loop-type equivalent monoid elements are always the same. Therefore, the assumption on $E$ means that either $X$ or $Y$ has only zeroes on its diagonal. Let us suppose this is the case for $X$. It follows that $X$ is a strictly upper triangular matrix, and is therefore nilpotent (i.e. $X^N = 0$ for some $N \in \N$). As $X$ is also assumed to be idempotent, we find $X = X^2 = \dots = X^N = 0$, and so $X = 0$. \\
\indent For the third part, we first show that $U$ and $\mathcal{P}_{a,b}(U)$ have the same diagonal when $a \geq n-d_U$ and $b \geq d_U$. First, it can be seen that $U$ and $U^a$ have the same diagonal. (Note that $a=0$ can only occur for $d_U = n$, in which case both $U$ and $U^0 = \Id$ have only ones on the diagonal). Next, we see that the diagonal of $(\Id-U^a)$ equals that of $U^a$ after changing the zeroes into ones and vice versa. Again, the diagonal of $(\Id-U^a)$ equals that of $(\Id-U^a)^b$. (Note that $b=0$ only occurs for $d_U = 0$, in which case both $(\Id-U^a)$ and $(\Id-U^a)^0 = \Id$ have only ones on the diagonal). Finally, we see that the diagonal of $\Id - (\Id-U^a)^b$ again equals that of $(\Id - U^a)^b$, but with the ones and zeroes reversed. It follows that the diagonal of $U$ indeed equals that of $\Id - (\Id - U^a)^b = \mathcal{P}_{a,b}(U)$. \\
It remains to show that $\mathcal{P}_{a,b}(U)$ is an idempotent. To this end, note that any element $U \in \mathcal{U}^{\Z}_{\mathcal{M}}$ with only zeroes and ones on the diagonal is conjugate to a real matrix of the form
\begin{equation}\label{matje1}
U \sim \left(
\begin{array}{ccc|ccc}
0 & & \bullet & & &\\
   & \ddots & &  & 0 & \\
 0 & & 0 & & &\\ \hline
 & & & 1 & & \bullet \\
& 0 &  &   & \ddots & \\
& & &   0 & & 1 \\
\end{array}
\right) \, .
\end{equation}
This follows for example by using Jordan normal form. Note also that the matrix of \eqref{matje1} has exactly $d_U$ ones on the diagonal, as this is the algebraic multiplicity of the eigenvalue $1$. It follows that $U^{a}$ is of the form
\begin{equation}\label{matje2}
U^{a} \sim \left(
\begin{array}{ccc|ccc}
0 & & 0 & & &\\
   & \ddots & &  & 0 & \\
 0 & & 0 & & &\\ \hline
 & & & 1 & & \bullet \\
& 0 &  &   & \ddots & \\
& & &   0 & & 1 \\
\end{array}
\right) \, .
\end{equation}
Likewise, we find 
\begin{align}\label{matje3}
\Id - U^{a} \sim &\left(
\begin{array}{ccc|ccc}
1 & & 0 & & &\\
   & \ddots & &  & 0 & \\
 0 & & 1 & & &\\ \hline
 & & & 0 & & \bullet \\
& 0 &  &   & \ddots & \\
& & &   0 & & 0 \\
\end{array}
\right) \, , \quad
(\Id - U^{a})^{b} \sim \left(
\begin{array}{ccc|ccc}
1 & & 0 & & &\\
   & \ddots & &  & 0 & \\
 0 & & 1 & & &\\ \hline
 & & & 0 & & 0 \\
& 0 &  &   & \ddots & \\
& & &   0 & & 0 \\
\end{array}
\right)  \, , \nonumber \\
\Id - (\Id - U^{a})^{b} \sim &\left(
\begin{array}{ccc|ccc}
0 & & 0 & & &\\
   & \ddots & &  & 0 & \\
 0 & & 0 & & &\\ \hline
 & & & 1 & & 0 \\
& 0 &  &   & \ddots & \\
& & &   0 & & 1 \\
\end{array}
\right) \, ,
\end{align}
so that $\Id - (\Id - U^{a})^{b} = \mathcal{P}_{a,b}(U)$ is indeed an idempotent matrix. This finishes the proof.
\end{proof}

%One can show exactly as in the proof of Lemma \ref{idemz} that $U$ and $1 - (1 - U^{d_0})^{d_1}$ have the same diagonal. Note that $d_0$ and $d_1$ may be described independently of the identification between $\Z\mathcal{M}$ and $\mathcal{U}^{\Z}_{\mathcal{M}}$. This is for example done by
%\begin{equation}
%d_1 = \#\left\{\sigma \in \mathcal{M} \, \mid \, \sum_{\tau \in \mathcal{L}_{\sigma}} r_{\tau} = 1\right\}\, ,
%\end{equation}
%and $d_0 = \#\mathcal{M} - d_1$, for $U = \sum_{\sigma \in \mathcal{M}} r_{\sigma} \cdot \sigma \in \Z\mathcal{M}$. The discussion outlined above is of course exactly the motivation behind the transformation $U \mapsto 1 - (1-U^n)^n$ of Lemma \ref{idemz}. $\hfill \triangle$

\noindent Next, we want to construct a set of elements with ones on the diagonal for entries corresponding to exactly one loop-type, and zeroes for all other diagonal entries. Lemma \ref{idemz} (parts 2 and 3) then allows us to construct primitive idempotents.

\begin{prop}\label{diagfix}
Let $\sigma \in \mathcal{M}$ be given, and suppose $\mathcal{S}$ is a generating set for $\mathcal{M}$. As before, we set $\mathcal{L}_{\sigma}^{\mathcal{S}}:= \mathcal{L}_{\sigma} \cap \mathcal{S}$. The element 
\begin{equation}\label{erjee}
T_{[\sigma]} := \prod_{\tau \in \mathcal{L}_{\sigma}^{\mathcal{S}}} U_{\tau} \hspace{-0.2cm} \prod_{\kappa \in \mathcal{S} \setminus \mathcal{L}_{\sigma}^{\mathcal{S}}} (\Id - U_{\kappa})
\end{equation}
has diagonal entries given by
\begin{equation}
(T_{[\sigma]})_{\mu, \mu} =  \begin{cases} 1 &\mbox{if } \mu  \sim \sigma \\0 &\mbox{otherwise,}  \end{cases}
\end{equation}
for $\mu \in \mathcal{M}$. This holds independently of the order in which the product in \eqref{erjee} is evaluated.
\end{prop}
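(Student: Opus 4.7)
The plan is to exploit the fact that diagonal entries of products of upper triangular matrices are themselves products of the corresponding diagonal entries. Since every $U_\tau$, and hence every $\Id - U_\kappa$, is upper triangular by Lemma \ref{key2}, the $\mu$-diagonal entry of $T_{[\sigma]}$ factors as
\begin{equation}
(T_{[\sigma]})_{\mu, \mu} = \prod_{\tau \in \mathcal{L}_{\sigma}^{\mathcal{S}}} (U_{\tau})_{\mu,\mu} \cdot \prod_{\kappa \in \mathcal{S} \setminus \mathcal{L}_{\sigma}^{\mathcal{S}}} \bigl(1 - (U_{\kappa})_{\mu,\mu}\bigr).
\end{equation}
This multiplicativity is just the identity $(AB)_{\mu,\mu} = \sum_\rho A_{\mu,\rho}B_{\rho,\mu}$ reducing to the single term $\rho = \mu$ when $A$ and $B$ are upper triangular, extended to arbitrarily many factors by induction. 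Because the right-hand side is a product of integers, it is commutative, which immediately yields the order-independence asserted at the end of the proposition.

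Next, I would invoke Remark \ref{Uiffby} - sorry, Remark \ref{Uiffff} - to read each factor as a 0/1 indicator: $(U_{\tau})_{\mu,\mu} = 1$ iff $\tau \in \mathcal{L}_{\mu}$, and for $\tau \in \mathcal{S}$ this is the same as $\tau \in \mathcal{L}_{\mu}^{\mathcal{S}}$. The whole product is therefore either $0$ or $1$, and it equals $1$ precisely when
\begin{equation}
\mathcal{L}_{\sigma}^{\mathcal{S}} \subseteq \mathcal{L}_{\mu}^{\mathcal{S}} \quad \text{(first product nonzero)} \quad \text{and} \quad \mathcal{L}_{\mu}^{\mathcal{S}} \subseteq \mathcal{L}_{\sigma}^{\mathcal{S}} \quad \text{(second product nonzero)},
\end{equation}
which together amount to the set equality $\mathcal{L}_{\sigma}^{\mathcal{S}} = \mathcal{L}_{\mu}^{\mathcal{S}}$.

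Finally, Proposition \ref{loopiss} identifies this set equality with $\sigma \sim \mu$, giving the claimed diagonal pattern. I do not foresee any real obstacle; the proof is essentially a bookkeeping exercise once one commits to working on the diagonal. If there is anything subtle, it is only the observation about multiplicativity of diagonals in a long product, but this is already implicitly used in the proof of Lemma \ref{idemz} and requires at most a single sentence here.
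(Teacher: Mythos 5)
Your proposal is correct and follows essentially the same route as the paper: both reduce the $\mu$-diagonal entry of $T_{[\sigma]}$ to the product of the diagonal entries of the upper triangular factors, read each factor as a $0/1$ indicator via Remark \ref{Uiffff}, and translate the resulting condition $\mathcal{L}_{\sigma}^{\mathcal{S}} = \mathcal{L}_{\mu}^{\mathcal{S}}$ into $\mu \sim \sigma$ by Proposition \ref{loopiss}, with order-independence coming from commutativity of the scalar product on the diagonal. The only cosmetic difference is that you phrase the computation as a single if-and-only-if, whereas the paper splits into the cases $\mu \sim \sigma$ and $[\mu] \neq [\sigma]$.
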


\begin{proof}
First, let $\mu$ be loop-type equivalent to $\sigma$. By Proposition \ref{loopiss} we have $\mathcal{L}_{\sigma}^{\mathcal{S}} = \mathcal{L}_{\mu}^{\mathcal{S}}$, so that we may write 
\begin{equation}\label{erjee2}
T_{[\sigma]} = \prod_{\tau \in \mathcal{L}_{\mu}^{\mathcal{S}}} U_{\tau} \hspace{-0.2cm} \prod_{\kappa \in \mathcal{S} \setminus \mathcal{L}_{\mu}^{\mathcal{S}}} (\Id - U_{\kappa})\, .
\end{equation}
In particular, we find
\begin{equation}\label{erjee3}
(T_{[\sigma]})_{\mu, \mu} = \prod_{\tau \in \mathcal{L}_{\mu}^{\mathcal{S}}} (U_{\tau})_{\mu, \mu}  \hspace{-0.2cm} \prod_{\kappa \in \mathcal{S} \setminus \mathcal{L}_{\mu}^{\mathcal{S}}} (\Id - U_{\kappa})_{\mu, \mu} \, .
\end{equation}
Now, for $\tau \in \mathcal{L}_{\mu}^{\mathcal{S}}$ we find $(U_{\tau})_{\mu, \mu} = 1$ by Remark \ref{Uiffff}. On the other hand,  $\kappa \in \mathcal{S} \setminus \mathcal{L}_{\mu}^{\mathcal{S}}$ implies $(U_{\kappa})_{\mu, \mu} = 0$, so that again $(\Id - U_{\kappa})_{\mu, \mu} = 1$. We conclude that indeed $(T_{[\sigma]})_{\mu, \mu} = 1$. \\
Next, suppose that $[\mu] \not= [\sigma]$, so that $\mathcal{L}_{\mu} \not= \mathcal{L}_{\sigma}$. By Proposition \ref{loopiss} we also have $\mathcal{L}_{\mu}^{\mathcal{S}} \not= \mathcal{L}_{\sigma}^{\mathcal{S}}$, so that there either exists a generator $\tau$ contained in $\mathcal{L}_{\sigma}^{\mathcal{S}}$, but not in $\mathcal{L}_{\mu}^{\mathcal{S}}$, or a generator $\kappa$ contained in $\mathcal{L}_{\mu}^{\mathcal{S}}$, but not in $\mathcal{L}_{\sigma}^{\mathcal{S}}$. In the first case, we find $(U_{\tau})_{\mu, \mu} = 0$, so that $(T_{[\sigma]})_{\mu, \mu} = 0$. In the second case, we get $(U_{\kappa})_{\mu, \mu} = 1$, and  so $(\Id - U_{\kappa})_{\mu, \mu} = 0$. As we have $\kappa \in \mathcal{S} \setminus \mathcal{L}_{\sigma}^{\mathcal{S}}$, we again conclude that $(T_{[\sigma]})_{\mu, \mu} = 0$.\\
Finally, we note that the discussion above is independent of the order in which the product of \eqref{erjee} is evaluated. More generally, for $R$ a commutative ring the diagonal of any product in $\mathcal{U}^R_{\mathcal{M}}$ is readily seen to be independent of the ordering of the terms.
\end{proof}
\noindent Next, we want to guarantee that the idempotents we construct are orthogonal. To this end we have the following transformation, inspired by the Gram-Schmidt procedure for orthogonalising a basis of a linear space with respect to an inner product. 
\begin{prop}\label{qq}
Let $E_1, \dots, E_m$ be mutually orthogonal idempotents in $\mathcal{U}_{\mathcal{M}}^{\Z}$. That is, we have $E_iE_j = \delta_{i,j}E_i$ for all $i,j \in \{1, \dots, m\}$. Suppose furthermore that $T \in \mathcal{U}_{\mathcal{M}}^{\Z}$ is an element with only zeroes and ones on the diagonal, in such a way that $TE_i$ has a vanishing diagonal for all $i \in \{1, \dots, m\}$. In other words, we have $\mathcal{D}_T \cap \mathcal{D}_{E_i} = \emptyset$ for all $i \in \{1, \dots, m\}$. We define the element
\begin{equation}
Q := T - \sum_{i =1}^m TE_i - \sum_{i =1}^m E_iT +  \sum_{i,j = 1}^m E_iTE_j \, .
\end{equation}
Then $Q$ has the same diagonal as $T$ and furthermore satisfies $QE_i = E_iQ = 0$ for all $i \in \{1, \dots, m\}$.
\end{prop}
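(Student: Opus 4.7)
The plan is to simplify the somewhat awkward expression for $Q$ by introducing the idempotent $P := \sum_{i=1}^{m} E_i$. Using the mutual orthogonality $E_iE_j = \delta_{i,j}E_i$, one checks that $P^2 = \sum_{i,j} E_iE_j = \sum_i E_i = P$, and moreover that $PE_i = \sum_j E_jE_i = E_i$ as well as $E_iP = E_i$ for every $i$. Under this notation the four-term expression for $Q$ collapses to the factorisation
\begin{equation}
Q \;=\; T - TP - PT + PTP \;=\; (\Id - P)\, T\, (\Id - P)\, .
\end{equation}

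From this factorisation the orthogonality claim becomes immediate. Indeed, $(\Id - P)E_i = E_i - PE_i = E_i - E_i = 0$, whence
\begin{equation}
QE_i \;=\; (\Id - P)\, T\, (\Id - P) E_i \;=\; 0\, ,
\end{equation}
and symmetrically $E_i Q = E_i(\Id - P)\, T\,(\Id - P) = 0$, since $E_i(\Id - P) = E_i - E_iP = 0$.

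For the claim about the diagonal, the key tool is that all elements of $\mathcal{U}^{\Z}_{\mathcal{M}}$ are upper triangular, so diagonal entries are multiplicative: $(AB)_{\sigma,\sigma} = A_{\sigma,\sigma}B_{\sigma,\sigma}$ for any $A,B \in \mathcal{U}^{\Z}_{\mathcal{M}}$. By the first part of Lemma \ref{idemz}, each $E_i$ has only zeroes and ones on the diagonal, and the same holds for $T$ by assumption. The hypothesis $\mathcal{D}_T \cap \mathcal{D}_{E_i} = \emptyset$ therefore says that for every $\sigma \in \mathcal{M}$ and every $i$, at least one of $T_{\sigma,\sigma}$ and $(E_i)_{\sigma,\sigma}$ vanishes. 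Consequently each of the three subtracted/added sums has vanishing diagonal: $(TE_i)_{\sigma,\sigma} = T_{\sigma,\sigma}(E_i)_{\sigma,\sigma} = 0$, and analogously for $(E_iT)_{\sigma,\sigma}$ and $(E_iTE_j)_{\sigma,\sigma}$. Summing over $i$ and $j$ leaves $Q_{\sigma,\sigma} = T_{\sigma,\sigma}$ for every $\sigma$.

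I do not expect a real obstacle here; the only subtlety is noticing that introducing $P$ turns the four-term formula into the conjugation $(\Id-P)T(\Id-P)$, which immediately produces both conclusions and bypasses any need to verify the orthogonality of $Q$ with the $E_i$ by a direct (and more tedious) term-by-term expansion.
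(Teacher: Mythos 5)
Your proof is correct and amounts to essentially the same argument as the paper's: setting $P=\sum_{i=1}^m E_i$ and factoring $Q=(\Id-P)\,T\,(\Id-P)$ is a tidier packaging of the paper's term-by-term cancellation in $QE_k$ and $E_kQ$, and your use of the multiplicativity of diagonal entries of upper triangular matrices simply spells out the vanishing of the diagonals of $TE_i$, $E_iT$ and $E_iTE_j$ that the paper asserts directly from the hypothesis. No gaps.
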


\begin{proof}
By assumption, it holds that $TE_i$, $E_iT$ and $E_iTE_j$ for $i,j \in \{1, \dots, m\}$ all have a vanishing diagonal.  From this we see that $Q$  and $T$ indeed have the same diagonal. Next, let $k \in \{1, \dots, m\}$ be given. A direct calculation shows
\begin{align}
QE_k&= TE_k - \sum_{i =1}^m TE_iE_k - \sum_{i =1}^m E_iTE_k +  \sum_{i,j = 1}^m E_iTE_jE_k  \\ \nonumber
&= TE_k - \sum_{i =1}^m T\delta_{i,k}E_k - \sum_{i =1}^m E_iTE_k +  \sum_{i,j = 1}^m E_iT\delta_{j,k}E_k  \\ \nonumber
&= TE_k -  TE_k - \sum_{i =1}^m E_iTE_k +  \sum_{i = 1}^m E_iTE_k  = 0 \,.
\end{align}
A similar calculation shows that $E_kQ = 0$. This finishes the proof. 
\end{proof}
\noindent Finally, it remains to guarantee completeness of the system of idempotents we will construct. To this end, we have the following result.

\begin{prop}\label{sumis1t}
Suppose $E_1, \dots, E_p$ are mutually orthogonal idempotents such that $X := E_1 + \dots + E_p$ has only ones on its diagonal, then we necessarily have $X = \Id$.
\end{prop}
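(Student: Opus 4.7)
The plan is to exploit the standard trick that an element which is simultaneously idempotent and nilpotent must be zero, applied to $Y := \Id - X$.

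First I would verify that $X$ itself is idempotent. This is a direct computation using mutual orthogonality:
\begin{equation}
X^2 = \left(\sum_{i=1}^p E_i\right)\left(\sum_{j=1}^p E_j\right) = \sum_{i,j=1}^p E_iE_j = \sum_{i=1}^p E_i^2 = \sum_{i=1}^p E_i = X\, .
\end{equation}
Next, I would pass to $Y := \Id - X$, noting that $Y$ lies in $\mathcal{U}_{\mathcal{M}}^{\Z}$ (since $\Id = U_e$ does and $\mathcal{U}_{\mathcal{M}}^{\Z}$ is a subalgebra). A short calculation gives $Y^2 = \Id - 2X + X^2 = \Id - X = Y$, so $Y$ is also idempotent.

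The key observation is then that $Y$ is strictly upper triangular: it is upper triangular because it sits in $\mathcal{U}_{\mathcal{M}}^{\Z}$ (by Lemma \ref{key2}), and its diagonal entries are all $1 - 1 = 0$ by the hypothesis on $X$. Hence $Y$ is nilpotent, say $Y^N = 0$ for some $N \in \N$. Combining this with idempotency yields $Y = Y^2 = \cdots = Y^N = 0$, so $X = \Id$ as claimed.

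I do not expect any serious obstacle here; the only point worth flagging is that the whole argument implicitly relies on Lemma \ref{key2} to guarantee that the ambient algebra consists of upper triangular matrices, which is precisely what makes the strictly-upper-triangular part nilpotent. The same chain of reasoning (idempotent diagonal entry equals $1$ plus upper triangularity forces the off-diagonal part to be nilpotent) was already used in part 2 of Lemma \ref{idemz}, so this proof is really a variant of that argument.
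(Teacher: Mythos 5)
Your proof is correct and follows essentially the same route as the paper: both arguments show $X^2 = X$, observe that $\Id - X$ is idempotent and, having a zero diagonal in the upper triangular algebra $\mathcal{U}_{\mathcal{M}}^{\Z}$, is nilpotent, and conclude $\Id - X = 0$. The only difference is cosmetic (naming $Y := \Id - X$ and spelling out the appeal to Lemma \ref{key2}), so there is nothing to add.
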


\begin{proof}
As the $E_i$ are mutually orthogonal idempotents, we also have $X^2 = X$. In particular, we find
\begin{equation}\label{Sortrip}
(\Id-X)^2 = \Id - 2X + X^2 = \Id -2X +X = \Id-X\, .
\end{equation}
Moreover, the diagonal of $X$ consists of only ones, so that $\Id-X$ is nilpotent. In other words, we have $(\Id-X)^n = 0$ for some $n \geq 1$. By applying equation \eqref{Sortrip} repeatedly we find
\begin{align}
0 &= (\Id-X)^n = (\Id-X)^{n-1} = \dots = \Id - X\, ,
\end{align}
so that indeed $X = \Id$. Alternatively, from the fact that $X$ has a diagonal consisting of only ones, we may conclude that there exists a real matrix $Y$ such that $XY = YX = \Id$. It again follows that $X = X^2Y = XY = \Id$. This completes the proof.
\end{proof}

\subsection{Summary; an algorithm for $\Z\mathcal{M}$}\label{summary}
Finally, we have everything in order to present the algorithm for finding a complete system of primitive orthogonal idempotents $E_1, \dots, E_p$ for $\Z\mathcal{M}$. Analogous to Definition \ref{defip}, we define the maps
\begin{align}
&\mathcal{P}_{a,b}: \Z\mathcal{M} \rightarrow \Z\mathcal{M} \\ \nonumber
&\mathcal{P}_{a,b}(X) = e - (e-X^{a})^b \, ,
\end{align}
for $a,b \in \Z_{\geq 0}$. We assume the $\mathcal{R}$-trivial monoid $\mathcal{M}$ is given, together with a generating set $\mathcal{S} \subset \mathcal{M}$ (for example $\mathcal{S} = \mathcal{M}$). As before, we set $n := \#\mathcal{M}$. The algorithm then proceeds in 3 steps.\\
\begin{enumerate}
\item For every element $\sigma \in \mathcal{M}$, we construct the set 
\begin{equation}
\mathcal{L}_{\sigma}^{\mathcal{S}} :=  \{\tau \in \mathcal{S} \, \mid \, \sigma  \tau = \sigma\}\, .
\end{equation}
We will denote all the mutually distinct sets we obtain this way by $\mathcal{L}_{\sigma_1}^{\mathcal{S}}$ up to $\mathcal{L}_{\sigma_p}^{\mathcal{S}}$, $p \in \{1, \dots, n\}$. In other words, $\{\sigma_1, \dots, \sigma_p\}$ is a complete system of representatives for the loop-type relation. We also set
\begin{equation}
d_i := \#\{\sigma \in \mathcal{M} \, \mid \, \mathcal{L}_{\sigma}^{\mathcal{S}} = \mathcal{L}_{\sigma_i}^{\mathcal{S}} \} \, ,
\end{equation}
for all $i \in \{1, \dots, p\}$. That is, $d_i$ denotes the number of elements in $\mathcal{M}$ that have the same loop-type as $\sigma_i$.
\item Next, we calculate the elements 
\begin{equation}\label{erjee4}
T_{i} := \prod_{\tau \in \mathcal{L}_{\sigma_i}^{\mathcal{S}}} \tau \hspace{-0.2cm} \prod_{\kappa \in \mathcal{S} \setminus \mathcal{L}_{\sigma_i}^{\mathcal{S}}} (e - \kappa) \in \Z\mathcal{M}\, ,
\end{equation}
for all $i \in \{1, \dots, p\}$, analogous to the elements $T_{[\sigma_i]}$ of Lemma \ref{diagfix}.
\item Next, we set $E_1 := \mathcal{P}_{a_1,b_1}(T_1)$, for any choice of $a_1$ and $b_1$ such that $a_1 \geq n-d_1$ and $b_1 \geq d_1$. We then construct the other $E_i$ recursively. If $E_1$ up to $E_{m-1}$ are found, we set  
\begin{equation}
Q_{m} := T_{m} - \sum_{i =1}^{m-1} T_{m}E_i - \sum_{i =1}^{m-1} E_iT_{m} +  \sum_{i,j = 1}^{m-1} E_iT_{m}E_j \, .
\end{equation}
We then define $E_{m} := \mathcal{P}_{a_{m}, b_{m}}(Q_{m})$, for any choice of $a_{m}$ and $b_{m}$ such that $a_{m} \geq n-d_{m}$ and $b_{m} \geq d_{m}$.
\end{enumerate}
\begin{thr}
The element $E_1, \dots, E_p \in \Z\mathcal{M}$ as constructed above form a complete system of primitive orthogonal idempotents.
\end{thr}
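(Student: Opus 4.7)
The strategy is to transfer everything to $\mathcal{U}^{\Z}_{\mathcal{M}}$ via the injective algebra morphism $\Psi$ of Lemma \ref{key1}, so that the structural results of Section \ref{zet} (which are stated for upper triangular matrices) apply directly. Since $\Psi$ is an isomorphism onto its image, idempotency, orthogonality, primitivity and completeness can equivalently be checked for $\Psi(E_1),\dots,\Psi(E_p)$. Throughout the proof I will abuse notation and identify each $T_i, Q_i, E_i$ with its image under $\Psi$, so $e$ becomes $\Id$.

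First I would establish by induction on $m$ that $E_1,\dots,E_m$ are mutually orthogonal idempotents, each of whose diagonal is the indicator of a single loop-type class, and such that $\mathcal{D}_{E_i}=[\sigma_i]$. For the base case, Proposition \ref{diagfix} gives that $T_1$ has diagonal equal to the indicator of $[\sigma_1]$, so $d_{T_1}=d_1$ and $T_1$ has only $0$s and $1$s on the diagonal. The choice $a_1\geq n-d_1$, $b_1\geq d_1$ together with Lemma \ref{idemz}(3) then makes $E_1=\mathcal{P}_{a_1,b_1}(T_1)$ an idempotent with the same diagonal as $T_1$, namely the indicator of $[\sigma_1]$. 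For the inductive step, assume $E_1,\dots,E_{m-1}$ have the desired properties. Proposition \ref{diagfix} gives that $T_m$ has diagonal equal to the indicator of $[\sigma_m]$, which is disjoint from all $\mathcal{D}_{E_i}$ for $i<m$; hence the hypothesis of Proposition \ref{qq} is satisfied. That proposition then yields $Q_m$ with the same diagonal as $T_m$ and $Q_m E_i = E_i Q_m = 0$ for $i<m$. Applying Lemma \ref{idemz}(3) again produces $E_m=\mathcal{P}_{a_m,b_m}(Q_m)$, an idempotent with diagonal the indicator of $[\sigma_m]$.

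The main thing to verify next — and the step I expect to require the most care — is that orthogonality is preserved when passing from $Q_m$ to $E_m$. The key observation is that $Q_m E_i = 0$ implies $Q_m^{a_m} E_i = 0$, so $(\Id - Q_m^{a_m}) E_i = E_i$ and by induction on the exponent $(\Id - Q_m^{a_m})^{b_m} E_i = E_i$. Therefore
\begin{equation}
E_m E_i = \bigl(\Id - (\Id - Q_m^{a_m})^{b_m}\bigr) E_i = E_i - E_i = 0,
\end{equation}
and the symmetric argument using $E_i Q_m = 0$ gives $E_i E_m = 0$. Hence $E_1,\dots,E_m$ are mutually orthogonal, completing the induction.

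Primitivity of each $E_i$ is then immediate from Lemma \ref{idemz}(2), since $\mathcal{D}_{E_i}=[\sigma_i]$ is by construction a single equivalence class under $\sim$. Finally, for completeness I would consider $X := E_1+\dots+E_p$. By construction the sets $\mathcal{D}_{E_i}=[\sigma_i]$ partition $\mathcal{M}$, so the diagonal of $X$ consists of all ones. Since the $E_i$ are mutually orthogonal idempotents, $X$ is itself idempotent, and Proposition \ref{sumis1t} forces $X=\Id$, i.e.\ $\sum_{i=1}^p E_i = e$ in $\Z\mathcal{M}$. This establishes all four defining properties of a complete system of primitive orthogonal idempotents.
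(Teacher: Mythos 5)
Your proof is correct and follows essentially the same route as the paper: transfer to $\mathcal{U}^{\Z}_{\mathcal{M}}$ via $\Psi$, an induction built on Proposition \ref{diagfix}, Proposition \ref{qq} and Lemma \ref{idemz}(3), then primitivity from Lemma \ref{idemz}(2) and completeness from Proposition \ref{sumis1t}. The only difference is cosmetic: for orthogonality of $E_m$ with the earlier $E_i$ you expand $(\Id-Q_m^{a_m})^{b_m}E_i=E_i$ directly, while the paper factors $\mathcal{P}_{a_m,b_m}(X)=X\,\mathcal{Q}_{a_m,b_m}(X)$; both arguments need $a_m\geq 1$, which the paper secures by noting that $d_m<n$ whenever $m\geq 2$ (if $d_1=n$ then $p=1$ and orthogonality is vacuous), a small point you may wish to state explicitly.
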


\begin{proof}
We prove the theorem by showing that the analogous elements in $\mathcal{U}_{\mathcal{M}}^{\Z}$ under the isomorphism of Lemma \ref{key1} form a complete system of primitive orthogonal idempotents. In doing so, we will use the same notation for the elements in $\Z\mathcal{M}$ as for the corresponding elements in $\mathcal{U}_{\mathcal{M}}^{\Z}$. \\
\indent First, Lemma \ref{loopiss} tells us that two element $\sigma, \sigma'$ are loop-type equivalent, if and only if $\mathcal{L}_{\sigma}^{\mathcal{S}} = \mathcal{L}_{\sigma'}^{\mathcal{S}}$. \\
Next, it follows from Lemma \ref{diagfix} that the diagonal of $T_i = T_{[\sigma_i]}$ consists of only zeroes and ones, with only ones at the $\sigma$-diagonal entries for which $\sigma \sim \sigma_i$. In particular, $T_i$ has exactly $d_i$ ones on its diagonal. Moreover, for any $\sigma \in \mathcal{M}$ there is exactly one $i \in \{1, \dots, p\}$ such that $(T_i)_{\sigma, \sigma} = 1$. \\
By part 3 of Lemma \ref{idemz}, the element $E_1 = \mathcal{P}_{a_1, b_1}(T_1)$ is an idempotent with the same diagonal as $T_1$. Let us therefore assume that the first $m-1$ elements $E_1, \dots, E_{m-1}$ that we have found using the algorithm are  mutually orthogonal idempotents such that $E_i$ has the same diagonal as $T_i$ for all $i \in \{1, \dots, m-1\}$. It follows that $T_{m}E_i$ has vanishing diagonal for all $i \in \{1, \dots, m-1\}$. Therefore, Proposition \ref{qq} tells us that $Q_{m}$ has the same diagonal as $T_{m}$, and furthermore satisfies $Q_{m}E_i = E_iQ_{m} = 0$ for all $i \in \{1, \dots, m-1\}$. It follows from part 3 of Lemma \ref{idemz} that $E_{m} = \mathcal{P}_{a_{m}, b_{m}}(Q_{m})$ is an idempotent with the same diagonal as $Q_{m}$, and therefore as $T_{m}$. \\
\indent Now, we may assume that $d_{m} < n = \#\mathcal{M}$. Namely, if we had $d_i = n$ for some $i \in \{1, \dots, p\}$ then every element of $\mathcal{M}$ would have the same loop-type as $\sigma_i$. In that case we would have $p=1$ and the algorithm would stop after finding $E_1$. Under the assumption $d_{m} < n$, we have $a_{m} \geq n-d_{m} > 0$, from which it may be seen that $\mathcal{P}_{a_{m}, b_{m}}(0) = 0$. Therefore, there exists a polynomial with integer coefficients $\mathcal{Q}_{a_{m}, b_{m}}$ such that $\mathcal{P}_{a_{m}, b_m}(X) = X\mathcal{Q}_{a_{m}, b_m}(X) = \mathcal{Q}_{a_{m}, b_m}(X)X$. It follows that 
\begin{equation}
E_iE_{m} = E_i\mathcal{P}_{{a_{m}, b_m}}(Q_{m}) = E_iQ_{m}\mathcal{Q}_{{a_{m}, b_m}}(Q_{m}) = 0\, .
\end{equation}
for all $i = \{1, \dots, m-1\}$.
Likewise, we find $E_{m}E_i = 0$ for all $i = \{1, \dots, m-1\}$. We conclude that $E_1, \dots, E_p$ are mutually orthogonal idempotents with the same diagonals as $T_1, \dots, T_p$. It furthermore follows from lemma \ref{idemz}, part 2, that every $E_i$ is primitive.\\
\indent Finally, it remains to show that $E_1 + E_2 + \dots + E_p = \Id$. However, as every $E_i$ has the same diagonal as $T_i$, we see that the sum $E_1 + E_2 + \dots + E_p$ has a diagonal consisting of only ones. It now follows from Proposition \ref{sumis1t} that indeed $E_1 + E_2 + \dots + E_p = \Id$. This completes the proof.
\end{proof}

\section{The general case }\label{gennn}
Finally, we show how the complete system of primitive orthogonal idempotents for $\Z\mathcal{M}$ constructed in the previous section leads to one for $R\mathcal{M}$. Here, $R$ is any ring with $1 \not= 0$ and with a given complete system of primitive orthogonal idempotents $\epsilon_1, \dots, \epsilon_q$. The main observation is that any ring $R$ has a copy of either $\Z$ or $\Z/m\Z$ for $m \in \N$ in its center. More precisely, we have 
\begin{equation}\label{incluiee}
\Z/m\Z \cong \{ \dots -1 + -1, -1, 0, 1, 1+1, \dots \} \subset R\, ,
\end{equation}
where we also allow for $m=0$ as $\Z/0\Z \cong \Z$. Note that we exclude the case $m = 1$, as this would imply $1 = 0$ in $R$. Using the identification of equation \eqref{incluiee}, we get a ring homomorphism  $\theta$ from $\Z$ to $R$, given explicitly by 
\begin{equation}
\theta(z) = 1+1+ \dots + 1 \,(z \text{ times})  \text{ for } z \in \Z.
\end{equation}
The homomorphism $\theta$ then induces a ring homomorphism $\Theta$ from $\Z\mathcal{M}$ to $R\mathcal{M}$, given by 
\begin{equation}
\Theta\left(\sum_{\sigma \in \mathcal{M}}r_{\sigma} \cdot \sigma \right) = \sum_{\sigma \in \mathcal{M}}\theta({r}_{\sigma}) \cdot \sigma \end{equation}
for $r_{\sigma} \in \Z$. In turn, we get a ring homomorphism $\Theta'$ from $\mathcal{U}_{\mathcal{M}}^{\Z}$ to $\mathcal{U}_{\mathcal{M}}^{R}$, given by 
\begin{equation}
\Theta'\left(\sum_{\sigma \in \mathcal{M}}r_{\sigma} U_{\sigma} \right) =  \sum_{\sigma \in \mathcal{M}}\theta({{r}}_{\sigma}) U_{\sigma}\, ,
\end{equation}
where we use $U_{\sigma}$ to denote the matrix of Section \ref{realisisa} in both $\mathcal{U}_{\mathcal{M}}^{\Z}$ and $\mathcal{U}_{\mathcal{M}}^{R}$. Note that it also holds that $\Theta'(X)_{i,j} = \theta(X_{i,j})$ for $X \in \mathcal{U}_{\mathcal{M}}^{\Z}$ and for all $i,j \in \{1, \dots, n\}$. Moreover, under the identification of equation \eqref{incluiee} we may also write $\theta(z) = \overline{z}$ for the class of $z \in \Z$ in $\Z/m\Z$. Motivated by this, we will simply write $\overline{X} := \Theta'(X)$ to denote the element in $\mathcal{U}_{\mathcal{M}}^{R}$ corresponding to $X \in \mathcal{U}_{\mathcal{M}}^{\Z}$.\\
\indent Let $E_1, \dots, E_p$ be a complete system of primitive orthogonal idempotents for $\mathcal{U}_{\mathcal{M}}^{\Z}$ obtained by the construction of Section \ref{summary}. We construct the corresponding elements $\overline{E}_1, \dots, \overline{E}_p $ in $\mathcal{U}_{\mathcal{M}}^{R}$. Note that we still have $\overline{E}_i\overline{E}_j = \delta_{i,j}\overline{E}_i$ for all $i,j \in \{1, \dots, p\}$, as well as $\sum_{i=1}^p\overline{E}_i = \overline{\Id} = \Id$. It may however happen that some of the elements $\overline{E}_i$ are not primitive anymore. We also note that the diagonal of every matrix $\overline{E}_i$ consists of only zeroes and ones, with a non-zero number of ones. This in particular implies that $\overline{E}_i \not= 0$. Finally, we note that the entries of $\overline{E}_i$ are all in the center of $R$. Using these observations, we may prove the following result.

\begin{thr}\label{fromztor}
Let $E_1, \dots, E_p$ be a complete system of primitive orthogonal idempotents for $\mathcal{U}_{\mathcal{M}}^{\Z}$ obtained by the construction of Section \ref{summary}, and let $\epsilon_1, \dots, \epsilon_q$  be a complete system of primitive orthogonal idempotents for $R$. Then, a complete system of primitive orthogonal idempotents for $R\mathcal{M}$ is given by $(\epsilon_i\overline{E}_j)_{i, j=1}^{q,p}$.
\end{thr}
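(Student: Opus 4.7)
\emph{Proof plan.} The idea is to verify the four axioms (idempotence, orthogonality, primitivity, completeness) in turn, exploiting the fact that the entries of each $\overline{E}_j$ lie in the central subring $\theta(\Z) \subset R$, so $\epsilon_i\Id$ commutes with $\overline{E}_j$ inside $\mat(R,n)$.

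First I would dispatch the easy axioms. Idempotence of $\epsilon_i\overline{E}_j$ follows immediately from the commutation just mentioned together with $\epsilon_i^2 = \epsilon_i$ and $\overline{E}_j^2 = \overline{E}_j$. Orthogonality of $\epsilon_i\overline{E}_j$ and $\epsilon_k\overline{E}_\ell$ for $(i,j)\neq(k,\ell)$ is immediate because either $\epsilon_i\epsilon_k=0$ or $\overline{E}_j\overline{E}_\ell = 0$. Completeness follows from factoring the double sum:
\begin{equation}
\sum_{i,j}\epsilon_i\overline{E}_j = \Bigl(\sum_i \epsilon_i\Bigr)\Bigl(\sum_j \overline{E}_j\Bigr) = 1\cdot\Id = \Id.
\end{equation}
That each $\epsilon_i\overline{E}_j$ is non-zero follows by inspecting any diagonal entry $\sigma$ with $\sigma\sim \sigma_j$: there the entry equals $\epsilon_i\cdot 1 = \epsilon_i\neq 0$.

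The genuinely interesting axiom is primitivity, so let me focus on that. Suppose $\epsilon_i\overline{E}_j = X+Y$ with $X,Y \in \mathcal{U}^R_{\mathcal{M}}$ orthogonal idempotents. Because the matrices are upper triangular, the formula $(XY)_{\sigma,\sigma} = X_{\sigma,\sigma}Y_{\sigma,\sigma}$ (and symmetrically for $YX$) holds. So for each $\sigma \in \mathcal{M}$, the pair $(X_{\sigma,\sigma},Y_{\sigma,\sigma})$ is a pair of orthogonal idempotents in $R$ summing to $(\epsilon_i\overline{E}_j)_{\sigma,\sigma}$. For $\sigma\not\sim \sigma_j$ that sum is $0$, forcing $X_{\sigma,\sigma} = Y_{\sigma,\sigma} = 0$ (an idempotent that squares to $0$ is zero). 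For $\sigma\sim\sigma_j$ the sum is $\epsilon_i$, and primitivity of $\epsilon_i$ in $R$ then forces one of $X_{\sigma,\sigma},Y_{\sigma,\sigma}$ to vanish and the other to equal $\epsilon_i$.

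The a priori issue is that different $\sigma\sim\sigma_j$ could make different choices, so the main step is to force consistency. This is exactly where Lemma \ref{diagisloop} does the work: since $X$ and $Y$ both sit in the image of $\Psi$, their diagonal entries are constant on each $\sim$-class. Consequently either $X_{\sigma,\sigma} = 0$ for every $\sigma\sim\sigma_j$ or $Y_{\sigma,\sigma}=0$ for every such $\sigma$. Combined with the previous paragraph this shows that one of $X,Y$ has identically zero diagonal, hence is strictly upper triangular and nilpotent; being also idempotent it must be $0$. This establishes primitivity and completes the proof. The main obstacle, as indicated, is the uniformity-across-a-loop-class argument, but Lemma \ref{diagisloop} handles it cleanly.
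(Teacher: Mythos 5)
Your proposal is correct and follows essentially the same route as the paper's proof: centrality of the entries of $\overline{E}_j$ for idempotence and orthogonality, factoring the double sum for completeness, a diagonal entry equal to $\epsilon_i$ for non-vanishing, and primitivity via the diagonal analysis combined with the constancy of diagonal entries on loop-type classes (Lemma \ref{diagisloop}) and the nilpotent-idempotent argument. No gaps to report.
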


\begin{proof}
As the entries of every $\overline{E}_i$ are in the center of $R$, we have
\begin{equation}
\epsilon_i\overline{E}_j\epsilon_k\overline{E}_l = \epsilon_i\epsilon_k\overline{E}_j\overline{E}_l = \delta_{i,k}\epsilon_i\delta_{j,l}\overline{E}_j\ = \delta_{i,k}\delta_{j,l}\epsilon_i\overline{E}_j\, ,
\end{equation}
for all $i, k \in \{1, \dots, q\}$ and $j, l \in \{1, \dots, p\}$. This shows that the elements $\epsilon_i\overline{E}_j$ are mutually orthogonal idempotents. Note also that none of the matrices $\epsilon_i\overline{E}_j$ equals $0$, as at least one diagonal entry of $\epsilon_i\overline{E}_j$ equals $\epsilon_i \not= 0$. Next, we see that
\begin{equation}
\sum_{i=1}^q \sum_{j=1}^p\epsilon_i\overline{E}_j = \sum_{j=1}^p\left(\sum_{i=1}^q \epsilon_i\right)\overline{E}_j =  \sum_{j=1}^p 1\cdot \overline{E}_j = \Id\, .
\end{equation}
It remains to show that every $\epsilon_i\overline{E}_j$ is primitive. To this end, let $X, Y \in \mathcal{U}_{\mathcal{M}}^{R}$ be two mutually orthogonal idempotents such that $X + Y = \epsilon_i\overline{E}_j$. In particular, for every $\sigma \in \mathcal{M}$ we have $X_{\sigma, \sigma} + Y_{\sigma, \sigma} = (\epsilon_i\overline{E}_j)_{\sigma, \sigma}$, with $X_{\sigma, \sigma}$ and $Y_{\sigma, \sigma}$ mutually orthogonal idempotents in $R$. \\
Let us first assume that $\sigma$ and $\sigma_j$ have different loop-type, so that $(\epsilon_i\overline{E}_j)_{\sigma, \sigma}= 0$. Setting $r := X_{\sigma, \sigma}$ we get $Y_{\sigma, \sigma} = -r$, $r^2 = r$ and $-r^2 = X_{\sigma, \sigma}Y_{\sigma, \sigma} = 0$. We conclude that $r = X_{\sigma, \sigma} = Y_{\sigma, \sigma} = 0$.\\
If on the other hand we have $\sigma \sim \sigma_j$, then we find $X_{\sigma, \sigma} + Y_{\sigma, \sigma} = (\epsilon_i\overline{E}_j)_{\sigma, \sigma} = \epsilon_i$ for $X_{\sigma, \sigma}$ and $Y_{\sigma, \sigma}$ mutually orthogonal idempotents. However, $\epsilon_i$ was assumed to be primitive, so that we find $X_{\sigma, \sigma} = 0$ or $Y_{\sigma, \sigma} = 0$. Finally, recall that diagonal entries corresponding to loop-type equivalent elements are always the same, by Lemma \ref{diagisloop}. It follows that either $X$ or $Y$ has a fully vanishing diagonal. Let us assume this is the case for $X$. We find that $X^n = 0$, so that we have $X = X^2 = \dots = X^n = 0$. This shows that $\epsilon_i\overline{E}_j$ is indeed primitive, thereby proving the theorem. 
\end{proof}

\begin{remk}
Given a monoid $\mathcal{M}$, we may form the opposite monoid $\mathcal{M}^{op}$. This latter monoid has the same set of elements and unit as $\mathcal{M}$, but with multiplication $\circ^{op}$ given by $\sigma \circ^{op} \tau := \tau \circ \sigma$ for $\sigma, \tau \in \mathcal{M}^{op}$. Here, $\circ$ denotes multiplication in $\mathcal{M}$. It follows that a monoid $\mathcal{M}$ is $\mathcal{L}$-trivial, if and only if $\mathcal{M}^{op}$ is $\mathcal{R}$-trivial. In particular, we may use our results to obtain a complete system of primitive orthogonal idempotents for a finite $\mathcal{L}$-trivial monoid as well. \\
\indent To this end, let $\mathcal{M}$ be an $\mathcal{L}$-trivial monoid and assume $R$ is a ring with a given system of  idempotents $\epsilon_1, \dots, \epsilon_q$. Similar to the opposite monoid, we may construct the opposite rings $R^{op}$ and $(R\mathcal{M})^{op}$. One then easily verifies that $(R\mathcal{M})^{op} \cong R^{op}\mathcal{M}^{op}$. Moreover, it follows from the definition of a complete system of primitive orthogonal idempotents that the identification between $R$ and $R^{op}$ as sets respects such systems. Hence, the elements $\epsilon_1, \dots, \epsilon_q$ also form a system of idempotents for $R^{op}$.  As $\mathcal{M}^{op}$ is $\mathcal{R}$-trivial, we get a system of idempotents $(\epsilon_i\overline{E}_j)_{i, j=1}^{q,p}$ for $R^{op}\mathcal{M}^{op}$. As before, it follows that $(\epsilon_i\overline{E}_j)_{i, j=1}^{q,p}$ is a system of idempotents for $R\mathcal{M} \cong (R^{op}\mathcal{M}^{op})^{op}$ as well. $\hfill \triangle$
\end{remk}
%, with $({E}_j)_{j=1}^p$ a system for $\Z\mathcal{M}$ as found in Section \ref{summary}.
\begin{remk}
Let $\mathcal{M}$ be a finite $\mathcal{R}$- or  $\mathcal{L}$-trivial monoid. As every field $k$ has only the idempotents $0$ and $1$, a complete system of primitive orthogonal idempotents for $k\mathcal{M}$ is simply given by $\{\overline{E}_1, \dots, \overline{E}_p \}$. Thus, such a system is in essence independent of the chosen field $k$. This result is in stark contrast with the case of finite groups. For example, the group ring $(\Z/3\Z) k$ decomposes into two irreducible representations for $k = \R$, but in three for  $k = \C$. Therefore, a complete system of primitive orthogonal idempotents for $(\Z/3\Z) k$ has two elements for $k = \R$, but three for $k = \C$.  $\hfill \triangle$
\end{remk}

\section{An example}\label{exammp}
We will now illustrate our algorithm with an example. Let $\mathcal{M} = \{\bf{1,2,3,4,5}\}$ be the monoid with five elements and multiplication table given by Figure \ref{multitab}. The unit of $\mathcal{M}$ is $\boldsymbol{1}$, and it may  be seen that $\mathcal{M}$ is generated by the set $\mathcal{S} := \{\bf{1,2,3}\}$. By looking at the entries in each row, one can verify that $\mathcal{M}$ is in fact $\mathcal{R}$-trivial.
\begin{figure}\label{multitab}
\centering
\begin{tabu}{ c|[1.2pt] c | c | c | c |c}
$\circ$ & \bf{1} & \bf{2} & \bf{3} & \bf{4} & \bf{5}\\ \tabucline[1.2pt]{-}
\bf{1} & \bf{1} & \bf{2} & \bf{3} & \bf{4} & \bf{5}\\ 
\hline
\bf{2} & \bf{2} & \bf{2} & \bf{2} & \bf{2} & \bf{2}\\ 
\hline
\bf{3} & \bf{3} & \bf{4} & \bf{5} & \bf{5} & \bf{5}\\ 
\hline
\bf{4} & \bf{4} & \bf{4} & \bf{4} & \bf{4} & \bf{4}\\ 
\hline
\bf{5} & \bf{5} & \bf{5} & \bf{5} & \bf{5} & \bf{5}\\ 
\end{tabu}
\caption{The multiplication table of an $\mathcal{R}$-trivial monoid $\mathcal{M} = \{\bf{1,2,3,4,5}\}$}
\label{multitab}
\end{figure}
We will go through the algorithm of Section \ref{summary} to determine a complete system of primitive orthogonal idempotents for $\Z\mathcal{M}$.
\begin{enumerate}
\item We first determine the sets $\mathcal{L}_{\sigma}^{\mathcal{S}} :=  \{\tau \in \mathcal{S} \, \mid \, \sigma  \tau = \sigma\}\,$ for all $\sigma \in \mathcal{M}$. From the multiplication table $\ref{multitab}$ we may see that
\begin{align}
\mathcal{L}^{\mathcal{S}}_{\boldsymbol{1}} = \mathcal{L}^{\mathcal{S}}_{\boldsymbol{3}}  = \{  \boldsymbol{1} \} \, , \qquad \mathcal{L}^{\mathcal{S}}_{\boldsymbol{2}} = \mathcal{L}^{\mathcal{S}}_{\boldsymbol{4}}  = \mathcal{L}^{\mathcal{S}}_{\boldsymbol{5}} = \{  \boldsymbol{1}, \boldsymbol{2}, \boldsymbol{3} \} = \mathcal{S}\, .
\end{align}
It follows that the loop-type relation is given by $\boldsymbol{1} \sim \boldsymbol{3}$ and $\boldsymbol{2} \sim \boldsymbol{4} \sim \boldsymbol{5}$. In particular, every element has the same loop-type as either $\boldsymbol{1}$ or $\boldsymbol{2}$. The number of elements with the same loop-type as $\boldsymbol{1}$ is given by $d_1 = 2$ and the number of elements with the same loop-type as $\boldsymbol{2}$ is given by $d_2 = 3$.
\item Next, we calculate the elements $T_1$ and $T_2$:
\begin{align}\label{erjee11}
T_{1} &= \prod_{\tau \in \mathcal{L}_{\boldsymbol{1}}^{\mathcal{S}}} {\tau} \hspace{-0.2cm} \prod_{\kappa \in \mathcal{S} \setminus \mathcal{L}_{\boldsymbol{1}}^{\mathcal{S}}} (\boldsymbol{1} - {\kappa}) = \boldsymbol{1}(\boldsymbol{1} - \boldsymbol{2})(\boldsymbol{1} - \boldsymbol{3}) = \boldsymbol{1} - \boldsymbol{3} - \boldsymbol{2} + \boldsymbol{2} = \boldsymbol{1} - \boldsymbol{3} \, ,\\ \nonumber
T_{2} &= \prod_{\tau \in {\mathcal{S}}} {\tau} = \boldsymbol{1}\boldsymbol{2}\boldsymbol{3} = \boldsymbol{2}\, .
\end{align}
\item After determining $T_1$ and $T_2$, we calculate $E_1 = \mathcal{P}_{a_1,b_1}(T_1) = \boldsymbol{1} - ({\boldsymbol{1}}-T_1^{a_1})^{b_1}$ for $a_1 = n-d_1 = 5-2 = 3$ and $b_1 = d_1 = 2$:
\begin{align}\label{erjee12}
&T_1^3 = (\boldsymbol{1} - \boldsymbol{3})^3 = \boldsymbol{1} - 3\cdot\boldsymbol{3} + 2\cdot\boldsymbol{5} \\ \nonumber
&\boldsymbol{1}- T_1^{3} = 3\cdot\boldsymbol{3} - 2\cdot\boldsymbol{5}  \\ \nonumber
&(\boldsymbol{1}- T_1^{3} )^{2} = (3\cdot\boldsymbol{3} - 2\cdot\boldsymbol{5})^2 = \boldsymbol{5} \\ \nonumber
&\boldsymbol{1} - (\boldsymbol{1}- T_1^{3} )^{2} = \boldsymbol{1} - \boldsymbol{5} \, .
\end{align}
Hence, we find $E_1 =  \boldsymbol{1} - \boldsymbol{5}$. One verifies that indeed $E_1^2 = E_1$. \\
Next, we calculate $Q_2$:
\begin{align}
Q_{2} &= T_{2} -  T_{2}E_1 - E_1T_{2} + E_1T_{2}E_1 \\ \nonumber
&= \boldsymbol{2} -  \boldsymbol{2}(\boldsymbol{1} - \boldsymbol{5}) - (\boldsymbol{1} - \boldsymbol{5})\boldsymbol{2} + (\boldsymbol{1} - \boldsymbol{5})\boldsymbol{2}(\boldsymbol{1} - \boldsymbol{5}) \\ \nonumber
&= \boldsymbol{2} - (\boldsymbol{2} - \boldsymbol{2}) - (\boldsymbol{2} - \boldsymbol{5}) + (\boldsymbol{1} - \boldsymbol{5})(\boldsymbol{2} - \boldsymbol{2}) \\ \nonumber
&= \boldsymbol{2} - 0 - (\boldsymbol{2} - \boldsymbol{5}) + 0  = \boldsymbol{5}\, .
\end{align}
Note that $Q_2$ already satisfies $Q_2^2 = Q_2$, and that we have $E_1 + Q_2 = \boldsymbol{1}$. Therefore, we have found a complete system of primitive orthogonal idempotents if we set $E_2 = Q_2 = \boldsymbol{5}$. To complete our algorithm, for any $a_2 \geq 1$ and $b_2 \geq 1$ we have
\begin{align}
 \mathcal{P}_{a_2,b_2}(Q_2) = \boldsymbol{1} - (\boldsymbol{1}-\boldsymbol{5}^{a_2})^{b_2} = \boldsymbol{1} - (\boldsymbol{1}-\boldsymbol{5})^{b_2} = \boldsymbol{1} - (\boldsymbol{1}-\boldsymbol{5}) = \boldsymbol{5}\, ,
\end{align}
so that we indeed find $E_2 = \boldsymbol{5}$. We have thus found a complete system of primitive orthogonal idempotents given by $E_1 =  \boldsymbol{1} - \boldsymbol{5}$ and $E_2 = \boldsymbol{5}$. $\hfill \triangle$
\end{enumerate}

\bibliography{networks}
\bibliographystyle{unsrt}

\end{document}